\documentclass[12pt]{amsart}
\usepackage{amssymb,amsmath,amsfonts,latexsym}
\usepackage{bm}
\usepackage[all,cmtip]{xy}
\usepackage{amscd}
\usepackage{setspace}
\usepackage{mathrsfs}
\usepackage[colorlinks ,citecolor=blue,urlcolor=blue,linkcolor=blue]{hyperref}

\allowdisplaybreaks

\newcommand{\bea}{\begin{eqnarray}}
	\newcommand{\eea}{\end{eqnarray}}

\newcommand{\clb}{\mathcal{B}}

\newcommand{\cle}{\mathcal{E}}

\newcommand{\clh}{\mathcal{H}}

\newcommand{\cln}{\mathcal{N}}

\newcommand{\clr}{\mathcal{R}}

\def\textmatrix#1&#2\\#3&#4\\{\bigl({#1 \atop #3}\ {#2 \atop #4}\bigr)}
\def\dispmatrix#1&#2\\#3&#4\\{\left({#1 \atop #3}\ {#2 \atop #4}\right)}
\newcommand{\be}{\begin{equation}}
	\newcommand{\ee}{\end{equation}}
\newcommand{\ben}{\begin{eqnarray*}}
	\newcommand{\een}{\end{eqnarray*}}

\newcommand{\NI}{\noindent}

\newcommand{\bi}{\begin{itemize}}
	\newcommand{\ei}{\end{itemize}}

\newcommand{\C}{\mathbb{C}}

\newcommand{\D}{\mathbb{D}}

\newcommand{\T}{\mathbb{T}}

\newcommand{\h}{{H}^2}

\theoremstyle{definition}

\theoremstyle{plain}

\newtheorem{thm}{Theorem}[section]
\newtheorem{cor}[thm]{Corollary}
\newtheorem{lem}[thm]{Lemma}
\newtheorem{prop}[thm]{Proposition}
\theoremstyle{definition}

\newtheorem{rem}[thm]{Remark}

\numberwithin{equation}{section}

\let\phi=\varphi

\begin{document}
\setcounter{page}{1}
	
\title[Partially isometric TTO and DTTO]{Partially isometric truncated and dual truncated Toeplitz operators}

\author[Babbar]{Kritika Babbar}
\address{Indian Institute of Technology Roorkee, Department of Mathematics,
		Roorkee-247 667, Uttarakhand,  India}
\email{kritika@ma.iitr.ac.in, kritikababbariitr@gmail.com}
	
\author[Javed]{Mo Javed}
\address{Indian Institute of Technology Roorkee, Department of Mathematics,
		Roorkee-247 667, Uttarakhand,  India}
\email{mo\_j@ma.iitr.ac.in, javediitr07@gmail.com}

\author[Maji]{Amit Maji}
\address{Indian Institute of Technology Roorkee, Department of Mathematics,
		Roorkee-247 667, Uttarakhand,  India}
\email{amit.maji@ma.iitr.ac.in, amit.iitm07@gmail.com ({Corresponding author)}}

\subjclass[2010]{47B35, 47A05, 30J05, }

\keywords{Hardy space, Model space, truncated Toeplitz operator, Hankel operator,
Dual truncated Toeplitz operator}

\begin{abstract}
Let $\theta$ be a non-constant inner function and let $\phi=\overline{u}v$, where $u$ and $v$ are inner functions such that $v$ divides $\theta$. In this paper we characterize the partially isometric truncated Toeplitz operators $A_{\phi}$ and dual truncated Toeplitz operators $D_{\phi}$ with symbols of the form $\phi=\overline{u}v$. Along with that, we obtain a few more characterization results, including the space of extremal vectors for non-zero partially isometric truncated and dual truncated Toeplitz operators.
\end{abstract}
	
\maketitle

\section{Introduction}

The study of truncated Toeplitz operators (TTOs in short) has its origins in the seminal paper of Sarason (\cite{DS}, 2007), who introduced a systematic framework for these operators and developed many of their fundamental properties. Given an inner function $\theta$, the corresponding model space $K_{\theta}=H^2\ominus\theta H^2$ serves as the natural domain for these operators, where $H^2$ denotes the Hardy space over the unit disk $\D$, and they are defined as the compression of the Laurent operators acting on the $L^2(\T)$ to the model space $K_{\theta}$. 
Sarason's work stimulated considerable interest in the operator-theoretic community, leading to rapid developments in comprehending the algebraic and spectral properties of TTOs. A central theme in Sarason's work is characterizing when TTOs belong to specific operator classes. Cima and Garcia \cite{CIMA-GARCIA-ROSS-WOGEN} studied the structure of these operators in connection with complex symmetric operators, demonstrating that every TTO is complex symmetric with respect to a canonical conjugation on $K_{\theta}$. They also investigated their properties on the finite dimensional model spaces \cite{CIMA-ROSS-WOGEN}. Later, several authors investigated these operators. We mention a few relevant works: \cite{B-C-F-M-T,BARANOV-BESSONOV-KAPUSTIN,G-ROSS-W,S-T-Z} including the recent survey \cite{ICF} and the references therein. Sedlock \cite{NS} subsequently established a complete characterization of idempotent TTOs while investigating the problem of when the product of two truncated Toeplitz operators is a truncated Toeplitz operator. More precisely, he introduced the so-called Sedlock classes which form maximal abelian subalgebras within the collection of bounded TTOs and provide a powerful framework for studying algebraic properties of TTOs.	
Alongside TTOs, the dual setting corresponding to orthogonal complements of model spaces gave rise to dual truncated Toeplitz operators (DTTOs in short). Ding and Sang \cite{XD} introduced these operators and established their basic structural properties along with the zero product problem of two DTTOs. These operators are the compression of the Laurent operators acting on the $L^2(\T)$ to the orthogonal complement of the model space, that is, $K_{\theta}^{\perp}=L^2(\T)\ominus K_{\theta}$.   
Later development includes the Brown-Halmos type theorems for and factorization results for DTTOs, characterizations and structural theory, essential commutation and related spectral problems (see \cite{C Gu, LI-SANG-DING,SANG-QIN-DING}).

Among the various operator classes arising in Hilbert space operator theory, partial isometries play a crucial role. They naturally appear in the study of shift operators, polar decompositions, operator models, and $C^*$-algebras. Consequently, determining when a truncated or dual truncated Toeplitz operator is a partial isometry becomes an interesting and natural problem. The concrete structure of partially isometric Toeplitz operators were given in the classical Brown and Douglas \cite{Brown_Douglas} classification of partially isometric Toeplitz operators on $H^2$. Later, Deepak, Pradhan, and Sarkar \cite{KD} established the Brown-Douglas result for partially isometric Toeplitz operators over the polydisk $\D^n$. Recently, Debnath, Pradhan, and Sarkar \cite{DEBNATH-PRADHAN-SARKAR} characterized the partially isometric truncated Toeplitz operatoprs with inner symbols on model spaces over the polydisk $\D^n$. But the characterization of partial isometric truncated Toeplitz operators for any bounded symbols is still not known. In this paper, we aim to characterize partially isometric dual truncated Toeplitz operators for some specific symbols which will surely enlarge the earlier classes. More precisely, we give characterization results for truncated and dual truncated Toeplitz operators with symbols of the form $\Bar{u}v$, where $u$ and $v$ are inner functions such that $v$ divides $\theta$.

The structure of the rest of the paper is as follows: In Section \ref{sec2}, we give some preliminary notations and definitions. In Section \ref{sec3}, we provide the characterization of dual truncated Toeplitz operators (and truncated Toeplitz operators) for some specific symbols. Section \ref{sec4} focuses on some interesting facts about the sets of extremal vectors for the non-zero partially isometric truncated and dual truncated Toeplitz operators, which are of independent interest. We conclude with some remarks in Section 5.

\section{Preliminaries}\label{sec2}
	
Let $\clh$ be a complex Hilbert space and let $\clb(\clh)$ be the space of all bounded operators on $\clh$. For $T \in \clb(\clh)$, let $\cln(T)$ and $\clr(T)$ denote the null space and range space of $T$, respectively. An operator $T \in \clb(\clh)$ is said to be a partial isometry if $\|T h\|=\|h\|$ for all $h \in \cln(T)^\perp$. The following characterization of a partial isometry are well-known to the literature (see \cite[Proposition 4.38]{RD}).
	
\begin{lem} \label{Lem:partial isometry equivalences}
Let $T\in \clb(\clh)$. Then the following are equivalent:
\begin{enumerate}
			\item $T$ is a partial isometry.
			\item $TT^*T=T$.
			\item $T^*TT^*=T^*$.
			\item $TT^*$ is a projection.
			\item $T^*T$ is a projection.
			\item $T^*$ is a partial isometry.
\end{enumerate}
\end{lem}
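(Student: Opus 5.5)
The plan is a cycle of implications together with a symmetry argument. The definition of partial isometry used here is that $\|Th\|=\|h\|$ for all $h\in\cln(T)^\perp$. Write $P=T^*T$. We have $\langle Ph,h\rangle=\|Th\|^2$, and $\cln(P)=\cln(T)$ because $\langle Ph,h\rangle=\|Th\|^2$.

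\textbf{(1)$\Rightarrow$(5).} Since $\langle Ph,h\rangle=\|Th\|^2$, condition (1) gives $\langle (I-P)h,h\rangle=0$ for $h\in\cln(T)^\perp$. The operator $P$ is positive and is a contraction on that subspace, so the compression $Q(I-P)Q$ is positive, where $Q$ is the projection onto $\cln(T)^\perp$. Its quadratic form vanishes, so $Q(I-P)Q=0$. Because $P$ reduces $\cln(T)^\perp$ (it is self-adjoint with kernel $\cln(T)$), this gives $P=Q$, which is a projection.

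\textbf{(5)$\Rightarrow$(2).} Suppose $P$ is a projection. For every $h$,
\[
\|T(I-P)h\|^2=\langle P(I-P)h,(I-P)h\rangle=0.
\]
Hence $T=TP=TT^*T$.

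\textbf{(2)$\Rightarrow$(1).} From $TT^*T=T$ we get $P^2=T^*TT^*T=T^*T=P$, so $P$ is a self-adjoint idempotent, that is, a projection. Its range is $\cln(P)^\perp=\cln(T)^\perp$. So for $h\in\cln(T)^\perp$ we have $\|Th\|^2=\langle Ph,h\rangle=\|h\|^2$.

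\textbf{Adjoint half.} Taking adjoints of the identity $TT^*T=T$ gives $T^*TT^*=T^*$, so (2)$\Leftrightarrow$(3). Applying the equivalences (1)$\Leftrightarrow$(2)$\Leftrightarrow$(5) already proved to $T^*$ in place of $T$ yields (6)$\Leftrightarrow$(3)$\Leftrightarrow$(4). Putting the two halves together, all six conditions are equivalent.

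The only delicate step is (1)$\Rightarrow$(5), where we must pass from isometry on $\cln(T)^\perp$ to the operator identity $T^*T=Q$. The cleanest way is polarization: for $h,k\in\cln(T)^\perp$,
\[
\langle Ph,k\rangle=\langle Th,Tk\rangle=\langle h,k\rangle,
\]
and both $P$ and $Q$ vanish on $\cln(T)$.
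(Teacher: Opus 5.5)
Your proof is correct and complete. The cycle (1)$\Rightarrow$(5)$\Rightarrow$(2)$\Rightarrow$(1) is sound, the adjoint of $TT^*T=T$ gives (2)$\Leftrightarrow$(3), and applying the result to $T^*$ settles (4) and (6). The polarization identity $\langle T^*Th,k\rangle=\langle h,k\rangle$ on $\cln(T)^\perp$ cleanly shows that $T^*T$ is the projection onto $\cln(T)^\perp$. The paper gives no proof of its own and simply cites Douglas [Proposition 4.38], whose standard argument is essentially the one you give.
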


Let $\D$ and $\T$ be the open unit disc and the unit circle in the complex plane $\C$, respectively. Let $L^2(\T)$ be the space of all square integrable functions with respect to normalised Lebesgue measure on $\T$ and $L^\infty(\T)$ denote the Banach algebra of all essentially bounded functions on $\T$. The \emph{Hardy-Hilbert space}, denoted by $\h$, consists of all analytic functions on $\D$ with square summable Taylor coefficients at the origin. Let $H^\infty$ denote the space of all bounded analytic functions on $\D$.
We often identify $H^2$ as a closed subspace of $L^2(\T)$, and write $L^2(\T)= H^2 \oplus {H^2}^{\perp}$. For any $\phi \in L^\infty(\T)$, the Laurent operator on $L^2(\T)$, denoted by $M_\phi$, is defined as
	\[
	M_\phi(f)=\phi f \quad (f \in L^2(\T)),
	\]
and the \textit{Toeplitz operator} $T_\phi: \h \rightarrow \h$ with symbol $\phi$ is defined as
	\[
	T_\phi f=P(\phi f) \quad (f \in \h),
	\] 
where $P$ is the orthogonal projection of $L^2(\T)$ onto $\h{}$. The \textit{Hankel operator} $H_\phi :\h \rightarrow \h{}^\perp$ is defined by
	\[
	H_\phi f= (I-P)(\phi f) \quad ( f\in \h).
	\]
Given a non-constant inner function $\theta$, the \textit{model space} $K_{\theta}$ is defined as $K_\theta = \h \ominus \theta \h$ and clearly, it is $T_z^*$-invariant. For any $\phi \in L^\infty(\T)$, the \textit{truncated Toeplitz operator}, first introduced by Sarason \cite{DS}, is defined by
	\[
	A_\phi f=P_\theta(\phi f), \quad (f \in K_\theta),
	\]
where $P_\theta$ is the orthogonal projection of $L^2(\T)$ onto $K_\theta$. It is easy to see that $\|A_\phi\| \leq \|\phi\|_\infty$. If $\phi(z)=z$, then $A_\phi$ is called the \textit{compressed shift}, denoted by $S_{\theta}$. For any $\phi \in L^\infty(\T)$ the \textit{dual truncated Toeplitz operator}, denoted by $D_\phi$, is defined on $K_{\theta}^{\perp} = L^2(\T) \ominus K_{\theta}$ as
	\[
	{D_\phi}f= Q_\theta(\phi f) \quad (f \in K_\theta^\perp),
	\]
where $Q_\theta$ is the orthogonal projection of $L^2(\T)$ onto $K_\theta^\perp$.  
Clearly,
	\[
	Q_\theta=I-P_\theta=I-P+ M_\theta PM_{\overline{\theta}}.
	\]
In the special case $\phi(z)=z$, $D_\phi$ is referred to as \textit{dual of the compressed shift $S_\theta$}, denoted as $D_\theta$. Also, for $\phi \in L^\infty(\T)$, the \textit{truncated Hankel operator} $B_\phi: K_\theta \rightarrow K_\theta^\perp$ and the \textit{dual truncated Hankel operator} $C_\phi: K_\theta^\perp \rightarrow K_\theta$ are defined as
	\[
	B_\phi f= Q_\theta(\phi f), \quad (f \in K_\theta)
	\]
and
	\[
	C_\phi f=P_\theta(\phi f), \quad (f \in K_\theta^\perp),
	\]
respectively. It is a routine check that $A_\phi^*=A_{\bar{\phi}}$, $D_{\phi}^*=D_{\bar{\phi}}$, and $B_\phi^*=C_{\bar{\phi}}$.	
Thus, the multiplication operator $M_{\phi}$ has the following block matrix representation with respect to the decomposition $L^2(\T)=K_{\theta}\oplus K_{\theta}^{\perp}$:
	\[
	M_{\phi}=
	\begin{bmatrix}
		A_{\phi} & C_{\phi}\\
		B_{\phi} & D_{\phi}
	\end{bmatrix}.
	\]
Now for $\phi, \psi\in L^{\infty}(\T)$, the relation $M_{\phi}M_{\psi}=M_{\phi\psi}$ yields the following identities (see \cite{XD}):
\begin{align}
		A_{\phi\psi}-A_{\phi}A_{\psi} &= B_{\Bar{\phi}}^*B_{\psi}\label{Eq:relation between A and B}\\
		B_{\phi\psi}-D_{\phi}B_{\psi} &= B_{\phi}A_{\psi} \label{Eq:relation between A B and D}\\
		D_{\phi\psi}-D_{\phi}D_{\psi} &= B_{\phi}B_{\Bar{\psi}}^* \label{Eq:relation between D and B}.	
\end{align}

A \textit{conjugation} on a complex Hilbert space is an antilinear isometric involution. In what follows, the operator $C: L^2(\T) \rightarrow L^2(\T)$ defined by
	\[
	Cf=\theta \overline{zf}, \quad (f \in L^2(\T))
	\]
is a conjugation which bijectively maps $\theta \h$ to $\h{}^\perp$ and $K_\theta$ to itself \cite{GM}. We will write $\tilde{f}$ for $Cf$ for the sake of brevity. Note that the TTOs and DTTOs are $C$-symmetric, that is 
	\[
	A_\phi^*=CA_\phi C \quad \text{and} \quad D_\phi^*= C D_\phi C.
	\]
	
In the following theorem, we recall some basic properties of $A_{\phi}, D_{\phi},$ and $B_{\phi}$.
	
\begin{prop}[\cite{XD}, \cite{KLN}]
Let $\phi \in L^\infty(\T)$. The following statements hold:
\begin{enumerate}
			\item $D_\phi$ is a bounded operator and $\|D_\phi\|=\|\phi\|_\infty$.
			\item $A_\phi=0$ if and only if $\phi=\theta \h +\overline{\theta \h}$.
			\item $D_\phi$ is compact if and only if $\phi=0$ a.e.\ on $\T$.
			\item $B_\phi=0$ if and only if $\phi$ is constant.	
		\end{enumerate}
\end{prop}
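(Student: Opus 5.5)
The four items need separate arguments; the common tools are the block matrix of $M_\phi$ on $L^2(\T)=K_\theta\oplus K_\theta^\perp$ and the product identities \eqref{Eq:relation between A and B}--\eqref{Eq:relation between D and B}. For (1), boundedness is immediate from $\|D_\phi\|\le\|M_\phi\|=\|\phi\|_\infty$. For the reverse inequality I will use that $K_\theta^\perp=\overline{z\,\overline{\h}}\oplus\theta\h$ contains $\theta z^n\h$ for every $n\ge0$. If $g=\theta z^n h$ with $h\in\h$, then $\phi g$ differs from $Q_\theta(\phi g)$ only by $P_\theta(\phi g)=P_\theta(\phi\theta z^n h)$. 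For fixed $h$, this term tends to $0$ as $n\to\infty$: indeed $\langle \phi\theta z^n h,k\rangle=\langle \phi\theta h,\bar z^n k\rangle\to0$ for each $k\in K_\theta$. To get norm convergence rather than only weak convergence, I would approximate $\phi\theta h$ by trigonometric polynomials in $L^2$; after multiplication by $z^n$ with $n$ large, such a polynomial lies in $z\h$ far from $K_\theta$, but multiplication by $z^n$ does not land in $\theta\h$ in general. The cleaner route is to use the conjugation-free fact that $\bar z^n\cdot$ on $\overline{z\h}$: take $g=\bar z^{n}f$ with $f\in\overline{z\h}$, so $g\in\overline{z\h}\subset K_\theta^\perp$, and $\phi g=\bar z^n(\phi f)$. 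Then $P(\bar z^n\phi f)\to0$ in norm, since $P\bar z^nF\to0$ for any $F\in L^2$ by tail summation of Fourier coefficients. Hence $P_\theta(\phi g)\to0$, so $\|D_\phi g\|\to\|\phi f\|$ while $\|g\|=\|f\|$. Since $\sup\|\phi f\|/\|f\|$ over $f\in\overline{z\h}$ equals $\|\phi\|_\infty$ (the same shift trick by $\bar z^n$ reduces it to all of $L^2$), this gives $\|D_\phi\|\ge\|\phi\|_\infty$.

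For (3), if $D_\phi$ is compact, then in the construction above the vectors $g_n=\bar z^n f$ tend to $0$ weakly. Compactness gives $D_\phi g_n\to0$ in norm, but $\|D_\phi g_n\|\to\|\phi f\|$, so $\phi f=0$ for all $f\in\overline{z\h}$. Taking $f=\bar z$ gives $\phi=0$ a.e.; the converse is trivial.

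For (2), I would cite Sarason's theorem: $A_\phi=0$ iff $\phi\in\theta\h+\overline{\theta\h}$. The sufficiency is direct: for $f\in K_\theta$, $\theta h f\in\theta\h$, and $\langle\bar\theta\bar hf,k\rangle=\langle f,\theta hk\rangle=0$. The necessity goes through testing on kernel functions $k_\lambda$ and $\tilde k_\lambda$ and Sarason's decomposition of symbols; this is the main obstacle, and I would simply invoke \cite{DS}.

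For (4), the sufficiency is clear, since $B_c f=Q_\theta(cf)=0$ for $f\in K_\theta$. For necessity, suppose $B_\phi=0$. Then $M_\phi K_\theta\subset K_\theta$, so $K_\theta$ is invariant under multiplication by $\phi$. Testing with the reproducing kernels $k_0=1-\overline{\theta(0)}\theta$ and using $S_\theta^*$-invariance, I would show that $K_\theta$ is also invariant under $M_{\bar\phi}$, since $B_{\bar\phi}=CB_\phi C$-type symmetry should follow from the $C$-symmetry. Then $\phi K_\theta\subset K_\theta$ with $\phi$ bounded forces $\phi\in\h$, because $\phi k_0\in\h$ and $k_0$ is outer-like; the same holds for $\bar\phi$, so $\phi$ is constant. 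I expect the symmetric step, getting $\bar\phi$-invariance, to be the delicate point.
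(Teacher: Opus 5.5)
The paper gives no proof of this proposition; it is quoted from \cite{XD} and \cite{KLN}, and item (2) is Sarason's theorem in \cite{DS}. So there is no in-paper argument to match. Your proposal is essentially correct.

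\textbf{Items (1) and (3).} The route usually taken in the literature compresses $D_\phi$ to $\theta\h\subset K_\theta^\perp$. Since $P_{\theta\h}=M_\theta PM_{\bar\theta}$, one has $P_{\theta\h}D_\phi(\theta h)=\theta T_\phi h$, so this compression is unitarily equivalent to $T_\phi$. Hence $\|D_\phi\|\ge\|T_\phi\|=\|\phi\|_\infty$. Likewise, compactness of $D_\phi$ forces compactness of $T_\phi$, hence $\phi=0$.

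Your argument works instead on $\overline{z\h}$ with $g_n=\bar z^n f$ and uses $\|P(\bar z^nF)\|\to 0$. It is self-contained: it does not import $\|T_\phi\|=\|\phi\|_\infty$ or the fact that $0$ is the only compact Toeplitz operator, and it handles (1) and (3) at once. Two clean-ups: drop the abandoned first attempt with $\theta z^n h$, and note that the relevant summand is $\overline{z\h}$, not $\overline{z\,\overline{\h}}$.

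\textbf{Item (2), sufficiency.} For $h\in\h$ and $f\in K_\theta$, the product $\theta hf$ lies only in $\theta H^1$, not $\theta\h$. Also, $\langle f,\theta hk\rangle$ need not be an absolutely convergent integral. The fix is to work with $f,k\in K_\theta\cap H^\infty$, which is dense in $K_\theta$. There $\theta\bar k=z\tilde k$ with $\tilde k$ bounded, so $\theta h_1f\bar k=zh_1f\tilde k\in z\h$. Similarly $\overline{\theta h_2}f\bar k=\overline{zh_2\tilde f k}$ with $zh_2\tilde fk\in z\h$. Both integrals vanish, and boundedness of $A_\phi$ extends the conclusion to all of $K_\theta$.

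\textbf{Item (4).} The step you flag as delicate is actually immediate, and no kernel testing is needed. We have $C(\phi g)=\bar\phi\,Cg$ and $CK_\theta=K_\theta$, so $\bar\phi K_\theta=C(\phi K_\theta)\subseteq K_\theta$ (equivalently, $B_{\bar\phi}=CB_\phi C$).

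You should also state precisely why $\phi k_0\in\h$ gives $\phi\in\h$. Because $\theta$ is non-constant, $|\theta(0)|<1$, so $|k_0|\ge 1-|\theta(0)|>0$ and $1/k_0\in H^\infty$. Then $\phi=(\phi k_0)(1/k_0)\in\h\cap L^\infty(\T)=H^\infty$. Likewise $\bar\phi\in H^\infty$, so $\phi$ is constant. The standing hypothesis that $\theta$ is non-constant is genuinely used here: for constant $\theta$, $K_\theta=\{0\}$ and (4) fails.
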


For $\alpha \in \C$, the Sedlock class $\mathscr{B}_{\alpha}^{\theta}$ is the set of such bounded TTOs which have a symbol of the form $\phi +\overline{\alpha S_\theta\tilde{\phi}} +c$, where $ \phi \in K_\theta$ and $c \in \C$. While $\mathscr{B}_\infty^{\theta}$ is the set of TTOs with co-analytic symbols. The set of all TTOs on $K_\theta$ forms a weakly closed subspace of $\clb(K_\theta)$ that we will denote by $\mathscr{T}_\theta$. We also use the following results related to the product of TTOs given by Sedlock in \cite{NS} to prove our main results. 

\begin{lem}\label{prod}
Let $A_\phi, A_\psi \in \mathscr{T}_{\theta}$ such that $A_\phi A_\psi \in \mathscr{T}_{\theta}$. If one of the operators in the product is in $\mathscr{B}_{\alpha}^{\theta}$ for $\alpha \in \C \cup\{\infty\}$, then either it is a constant multiple of identity or the other is also in $\mathscr{B}_{\alpha}^{\theta}$.
\end{lem}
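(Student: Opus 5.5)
Since Lemma~\ref{prod} is Sedlock's theorem \cite{NS}, in the paper it is simply quoted. If I had to prove it, I would combine two tools. The first is Sarason's intrinsic characterization of $\mathscr{T}_\theta$: a bounded operator $A$ on $K_\theta$ belongs to $\mathscr{T}_\theta$ if and only if
\[
A-S_\theta A S_\theta^*=\psi\otimes k_0+k_0\otimes\chi
\]
for some $\psi,\chi\in K_\theta$, where $k_0=1-\overline{\theta(0)}\theta$ is the reproducing kernel at $0$. The second is Sedlock's description of the classes: for $\alpha\in\C$, $\mathscr{B}_\alpha^\theta$ is exactly the commutant of the rank-one perturbation
\[
S_\theta^{\alpha}=S_\theta+c_\alpha\, k_0\otimes \widetilde{k_0}
\]
of the compressed shift, with the appropriate constant $c_\alpha$. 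For $\alpha=\infty$ the operator $S_\theta$ is replaced by $S_\theta^*$, and $\mathscr{B}_\infty^\theta=\{A_{\bar\phi}:\phi\in H^\infty\}$, the commutant of $S_\theta^*$.

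\emph{Reduction.} Since $A\mapsto A^*$ and $A\mapsto CAC$ preserve $\mathscr{T}_\theta$ and permute the Sedlock classes in a controlled way ($\alpha\mapsto 1/\bar\alpha$), I may assume $T:=A_\phi\in\mathscr{B}_\alpha^\theta$ is the left factor. The case $\alpha=\infty$ can then be handled by the same computation with $S_\theta^*$ in place of $S_\theta^\alpha$.

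\emph{Main computation.} Write $S:=S_\theta$, $R:=A_\psi$, and $S=S_\theta^\alpha-F$ with $F$ rank one. Using $TS^\alpha=S^\alpha T$, expand
\[
TR-S(TR)S^*=T\bigl(R-SRS^*\bigr)+\bigl(TS-ST\bigr)RS^*,
\]
and note that $TS-ST=ST^{}_{}-\dots$ reduces to $FT-TF$, a difference of two rank-one operators. The first term on the right has the form $T\psi\otimes k_0+Tk_0\otimes\chi$ by Sarason's criterion applied to $R$. Requiring the left-hand side to be of the form $\psi'\otimes k_0+k_0\otimes\chi'$ (because $TR\in\mathscr{T}_\theta$) gives an identity among at most four rank-one operators.

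\emph{Case analysis.} I would compare ranges and co-ranges in that identity, using linear independence of $k_0$ and $\widetilde{k_0}$ and the fact that $T$ commutes with $S^\alpha$, which is cyclic. Two outcomes should emerge:
\begin{itemize}
\item the vectors force $Tk_0\in\C k_0$, and cyclicity then gives $T=cI$;
\item otherwise the coefficients must cancel in a way equivalent to $RS^\alpha=S^\alpha R$, so $R\in\mathscr{B}_\alpha^\theta$.
\end{itemize}
A symmetric argument handles the case where the right factor is assumed to lie in $\mathscr{B}_\alpha^\theta$.

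\emph{Expected obstacle.} The main difficulty is this rank-one bookkeeping. One has to handle carefully the degenerate situations where the vectors $T\psi$, $Tk_0$, $\widetilde{k_0}$ and $RS^*\widetilde{k_0}$ are linearly dependent, and the special values $\alpha$ with $|\alpha\theta(0)|=1$, where $c_\alpha$ degenerates. I would first treat the case $\theta(0)=0$, where $k_0=1$ and the formulas simplify, and then transfer to general $\theta$ via the Crofoot transform, which preserves $\mathscr{T}_\theta$ and maps Sedlock classes to Sedlock classes.
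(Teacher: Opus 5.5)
The paper does not prove this lemma; it quotes it from Sedlock \cite{NS}, so your sketch has to stand on its own rather than be compared with an in-paper argument. Your strategy (Sarason's criterion plus $\mathscr{B}^\theta_\alpha\subseteq\{S^\alpha_\theta\}'$) is sound. The gap is that the decisive dichotomy is only announced (``two outcomes should emerge''), and the bookkeeping you anticipate is not actually needed. Use the convention $(x\otimes y)h=\langle h,y\rangle x$ and Sedlock's normalization $c_\alpha=\alpha/(1-\alpha\overline{\theta(0)})$, in which the symbols of $\mathscr B^\theta_\alpha$ are $\phi+\alpha\overline{S_\theta\widetilde\phi}+c$. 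Then your four terms regroup as
\[
TR-S_\theta(TR)S_\theta^*=T\psi\otimes k_0+k_0\otimes \overline{c_\alpha}\,S_\theta R^*T^*\widetilde{k_0}+Tk_0\otimes w,\qquad w=\chi-\overline{c_\alpha}\,S_\theta R^*\widetilde{k_0}.
\]
The first two terms already have Sarason's form. A rank-one operator $x\otimes y$ has that form if and only if $x\in\C k_0$ or $y\in\C k_0$ (test it on $h\perp k_0$). Hence $TR\in\mathscr{T}_\theta$ if and only if $Tk_0\in\C k_0$ or $w\in\C k_0$.

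\begin{itemize}
\item In the first case $T=\lambda I$, because $k_0$ is cyclic for $S^\alpha_\theta$: $(S^\alpha_\theta)^nk_0-S^n_\theta k_0$ lies in the span of $S_\theta^jk_0$, $j<n$, and $S^n_\theta k_0=P_\theta z^n$.
\item In the second case, write $R=A_{\phi+\bar\chi}$ with $\phi,\chi\in K_\theta$, so that $\psi=\phi$. The identities $A_{\bar\phi}\widetilde{k_0}=\widetilde\phi$, $A_\chi\widetilde{k_0}=\chi(0)\widetilde{k_0}-\theta(0)S_\theta^*\chi$, $S_\theta\widetilde{k_0}=-\theta(0)k_0$ and $S_\theta S_\theta^*=I-k_0\otimes k_0$ give $S_\theta R^*\widetilde{k_0}=S_\theta\widetilde\phi-\theta(0)\chi$. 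The condition $w\in\C k_0$ then becomes $\chi\in\bar\alpha S_\theta\widetilde\phi+\C k_0$. Using $A_{\overline{k_0}}=I$, this says exactly that $R\in\mathscr{B}^\theta_\alpha$.
\end{itemize}
So no linear-dependence case analysis is needed, and neither is the reverse inclusion $\{S^\alpha_\theta\}'\subseteq\mathscr{B}^\theta_\alpha$. Also, the vector that occurs is $S_\theta R^*\widetilde{k_0}$, not $RS_\theta^*\widetilde{k_0}$.

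The peripheral parts of your plan can be simplified or corrected:
\begin{itemize}
\item The degenerate parameter is the single value $\alpha=1/\overline{\theta(0)}$, not the circle $|\alpha\theta(0)|=1$.
\item You need neither a Crofoot transform (which maps $\mathscr{T}_\theta$ to $\mathscr{T}_{\theta_a}$, not to itself) nor a separate right-factor argument. If $AB\in\mathscr{T}_\theta$, then $C$-symmetry gives $A^*B^*=(CAC)(CBC)=C(AB)C=(AB)^*=B^*A^*$, so $AB=BA$.
\item Since $(\mathscr{B}^\theta_\alpha)^*=\mathscr{B}^\theta_{1/\bar\alpha}$, passing to adjoints lets you assume $|\alpha|\le 1$. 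There $\alpha\overline{\theta(0)}\neq 1$, so $c_\alpha$ is finite, and this disposes of $\alpha=\infty$ and the degenerate value at once.
\item ``The same computation with $S_\theta^*$ in place of $S^\alpha_\theta$'' does not literally fit your expansion, which is built on $S_\theta(\cdot)S_\theta^*$. If you want a direct argument for $\alpha=\infty$, use the other form of Sarason's criterion, for which $TR-S_\theta^*(TR)S_\theta=T(R-S_\theta^*RS_\theta)$ when $T$ commutes with $S_\theta^*$.
\end{itemize}
With these points filled in, your outline becomes a complete, self-contained proof, whereas the paper relies entirely on the citation.
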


\begin{lem}\label{typealphachar}
Let $A \in \mathscr{B}_{\alpha}^\theta$ for $\alpha \in \D$. There exists a function $ \psi \in H^\infty$ such that $\|\psi\|_\infty=\|A\|$ and $A=A_{\frac{\psi}{1-\alpha \bar{\theta}}}$. Moreover, if $\phi, \psi \in H^\infty$, then $A_{\frac{\phi}{1-\alpha \bar{\theta}}}A_{\frac{\psi}{1-\alpha \bar{\theta}}}=A_{\frac{\phi\psi}{1-\alpha \bar{\theta}}}$.
\end{lem}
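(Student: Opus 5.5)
The plan is to reduce to the analytic case $\alpha=0$ by a Crofoot transform, which conjugates $\mathscr{B}_{\alpha}^{\theta}$ onto $\mathscr{B}_{0}^{\theta_\alpha}$, and then to use commutant lifting there. Fix $\alpha\in\D$. Put
\[
\theta_\alpha=\frac{\theta-\alpha}{1-\overline{\alpha}\theta},
\]
which is again inner, and define the Crofoot transform
\[
J_\alpha f=\frac{\sqrt{1-|\alpha|^2}}{1-\overline{\alpha}\theta}\,f, \qquad f\in K_\theta .
\]
This is a unitary operator from $K_\theta$ onto $K_{\theta_\alpha}$ (Sarason's computation). The first step is to recall Sarason's result that $J_\alpha\mathscr{T}_\theta J_\alpha^*=\mathscr{T}_{\theta_\alpha}$, and to track symbols: one should get
\[
J_\alpha A^{\theta}_{\phi}J_\alpha^*=A^{\theta_\alpha}_{\phi'},\qquad \phi'=\frac{1-|\alpha|^2}{|1-\overline{\alpha}\theta|^2}\,\phi
\]
modulo the ideal $\theta_\alpha H^2+\overline{\theta_\alpha H^2}$. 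Applying this to a symbol of the form $\phi+\overline{\alpha S_\theta\tilde\phi}+c$, I expect to show that $J_\alpha\mathscr{B}_{\alpha}^{\theta}J_\alpha^*=\mathscr{B}_{0}^{\theta_\alpha}$, the class of TTOs on $K_{\theta_\alpha}$ with analytic symbols. This symbol bookkeeping is the main obstacle. I would first try the cleaner characterization $\mathscr{B}_{\alpha}^{\theta}=\{S_\theta+\alpha\, \overline{\text{conj}}\ldots\}'$, that is, identify $\mathscr{B}_{\alpha}^{\theta}$ with the commutant of the Clark-type operator $S_{\theta}^{\alpha}=S_\theta+\frac{\alpha}{1-\ldots}(\cdot)$. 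Then it suffices to check that $J_\alpha$ intertwines this operator with $S_{\theta_\alpha}$.

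Granting this, let $A\in\mathscr{B}_{\alpha}^{\theta}$. The operator $B=J_\alpha AJ_\alpha^*$ commutes with $S_{\theta_\alpha}$. By the commutant lifting theorem (Sarason's form), there is $\psi_0\in H^\infty$ with $B=A^{\theta_\alpha}_{\psi_0}$ and $\|\psi_0\|_\infty=\|B\|=\|A\|$. Transforming back gives $A=J_\alpha^*A^{\theta_\alpha}_{\psi_0}J_\alpha$. The multiplier $\frac{\sqrt{1-|\alpha|^2}}{1-\overline\alpha\theta}$ together with its inverse combine with the identity $\frac{1}{1-\alpha\overline\theta}=\frac{\theta}{\theta-\alpha}$ on $\T$. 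Computing $P_\theta$ of $\psi_0 f$ after the two multiplications, I expect the result to collapse to $P_\theta\big(\frac{\psi}{1-\alpha\overline\theta}f\big)$ with $\psi=\psi_0$, possibly after replacing $\psi_0$ by $\psi_0\circ$ (a disc automorphism in the $\theta$-variable), which does not change the sup norm. This gives the first assertion with $\|\psi\|_\infty=\|A\|$.

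For the multiplicativity, the analytic case is immediate. For $f,g\in H^\infty$ and $h\in K_{\theta_\alpha}$,
\[
A_fA_gh=P_{\theta_\alpha}\big(fP_{\theta_\alpha}(gh)\big)=P_{\theta_\alpha}(fgh),
\]
because $gh-P_{\theta_\alpha}(gh)\in\theta_\alpha H^2$ and $f\theta_\alpha H^2\subset\theta_\alpha H^2\perp K_{\theta_\alpha}$. Conjugating by $J_\alpha$ and using the symbol correspondence $\psi\mapsto\frac{\psi}{1-\alpha\overline\theta}$ established above, which is linear and maps products to products because it is implemented by a unitary conjugation, then yields
\[
A_{\frac{\phi}{1-\alpha\bar\theta}}A_{\frac{\psi}{1-\alpha\bar\theta}}=A_{\frac{\phi\psi}{1-\alpha\bar\theta}} .
\]
Alternatively, this can be checked directly. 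Expand $\frac{1}{1-\alpha\overline\theta}=\sum_n\alpha^n\overline\theta^{\,n}$, and compute $P_\theta\big(\overline\theta^{\,n}\phi\,P_\theta(\ldots)\big)$, using that $\overline\theta^{\,n}\theta H^2$ components are absorbed. I would use this as a cross-check.
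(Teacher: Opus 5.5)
The paper gives no proof of this lemma (it is quoted from Sedlock's paper), so your argument has to stand on its own. Your strategy is sound: a Crofoot transform to $K_{\theta_\alpha}$, followed by Sarason's commutant lifting. Your choice of parameter is the right one, and both the commutant-lifting step and the analytic-case multiplicativity are fine. \textbf{However, the two steps that carry the proof are only asserted, not proved.}

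The first missing step is an exact identity on which both assertions rest:
\[
J_\alpha^*A^{\theta_\alpha}_{\psi}J_\alpha=A^{\theta}_{\psi/(1-\alpha\bar\theta)}\qquad(\psi\in H^\infty).
\]
You only ``expect'' this identity. Your claim that the correspondence maps products to products ``because it is implemented by a unitary conjugation'' is circular until the identity is proved for every $\psi$. The hedge about composing $\psi_0$ with a disc automorphism also shows the computation was not done; no such modification is needed.

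The second missing step is the inclusion $J_\alpha\mathscr{B}^\theta_\alpha J_\alpha^*\subseteq\{S_{\theta_\alpha}\}'$, which you simply grant. The one concrete tool you offer for it is false. Since $1-\bar\alpha\theta=(1-|\alpha|^2)/(1+\bar\alpha\theta_\alpha)$, your multiplier equals $|1+\bar\alpha\theta_\alpha|^2/(1-|\alpha|^2)$. For $\phi=1$ it therefore gives $A^{\theta_\alpha}_{\phi'}=\frac{1+|\alpha|^2}{1-|\alpha|^2}I$, not $J_\alpha A^\theta_1J_\alpha^*=I$; the multiplier is inverted. The Clark-operator alternative is not actually written down. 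It would also import Sedlock's commutant theorem, which is as deep as the lemma itself.

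Both gaps close with the ingredients you already named.

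\emph{The identity.} Put $k=\sqrt{1-|\alpha|^2}/(1-\bar\alpha\theta)$, so $k^{\pm1}\in H^\infty$ and $k^{-1}\theta_\alpha=(\theta-\alpha)/\sqrt{1-|\alpha|^2}$. For $f\in K_\theta$ write $\psi kf=P_{\theta_\alpha}(\psi kf)+\theta_\alpha h$. Then $G:=J_\alpha^*A^{\theta_\alpha}_\psi J_\alpha f\in K_\theta$ satisfies $\psi f=G+(\theta-\alpha)h_1$ with $h_1\in H^2$. Since $1-\alpha\bar\theta=\bar\theta(\theta-\alpha)$ on $\T$,
\[
\frac{\psi f}{1-\alpha\bar\theta}=\sum_{n\ge0}\alpha^n\bar\theta^{\,n}G+\theta h_1 ,
\]
with $\bar\theta^{\,n}G\in\overline{zH^2}$ for $n\ge1$. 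Applying $P_\theta$ gives the identity with the same $\psi$, and multiplicativity follows.

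\emph{The inclusion.} If $\phi\in K_\theta$, then $\bar\theta\phi=\overline{z\tilde\phi}\equiv\overline{S_\theta\tilde\phi}$ modulo $\overline{\theta H^2}$, and $\bar\theta^{\,n}\phi\in\overline{\theta H^2}$ for $n\ge2$. Hence
\[
\phi+\alpha\overline{S_\theta\tilde\phi}+c\equiv\frac{\phi+c}{1-\alpha\bar\theta}\pmod{\theta H^2+\overline{\theta H^2}}.
\]
This is the normalization the lemma requires; reading $\overline{\alpha S_\theta\tilde\phi}$ literally would give $1-\bar\alpha\bar\theta$ instead. Now run the same computation on the dense set $K_\theta\cap H^\infty$ with $\psi=\phi+c\in H^2$. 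It shows that $J_\alpha AJ_\alpha^*$ is a bounded truncated Toeplitz operator on $K_{\theta_\alpha}$ with analytic symbol, hence commutes with $S_{\theta_\alpha}$. Your commutant-lifting step then produces $\psi\in H^\infty$ with $\|\psi\|_\infty=\|A\|$ and $A=A_{\psi/(1-\alpha\bar\theta)}$.
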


\section{Partially isometric dual truncated Toeplitz operators}\label{sec3}

In this section we obtain the characterization of partially isometric DTTO (hence TTO)
for the symbol of the form $\bar{u}v$, where $u, v$ are inner functions and $v$ divides 
$\theta$.

Before proceeding to the characterization result, we first show the equivalent relation of TTO, DTTO and truncated Hankel operator.
	
\begin{lem}\label{Lem:A B and D partial isometry equivalences}
Let $\phi\in L^{\infty}(\T)$ with $|\phi|=1$ a.e.\ on $\T$. Then the following are equivalent:
		\begin{enumerate}
			\item $A_{\phi}$ is a partial isometry.
			\item $B_{\phi}$ is a partial isometry.
			\item $D_{\phi}$ is a partial isometry.
		\end{enumerate}
\end{lem}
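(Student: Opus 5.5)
The plan is to exploit the fact that when $|\phi|=1$ a.e.\ on $\T$, the Laurent operator $M_\phi$ is unitary on $L^2(\T)$. The identities \eqref{Eq:relation between A and B} and \eqref{Eq:relation between D and B} then say that $B_\phi^*B_\phi$ and $B_\phi B_\phi^*$ are the complements (relative to the identity) of $A_\phi^*A_\phi$ and $D_\phi D_\phi^*$, respectively. Each equivalence then reduces, via Lemma \ref{Lem:partial isometry equivalences}, to the fact that $P$ is a projection if and only if $I-P$ is.

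First, apply \eqref{Eq:relation between A and B} with the pair $(\bar\phi,\phi)$ in place of $(\phi,\psi)$. Since $\bar\phi\phi=|\phi|^2=1$, we have $A_{\bar\phi\phi}=A_1=I_{K_\theta}$. Using $A_{\bar\phi}=A_\phi^*$, this gives
\[
I_{K_\theta}-A_\phi^*A_\phi = B_{\phi}^*B_\phi .
\]
By Lemma \ref{Lem:partial isometry equivalences}, $A_\phi$ is a partial isometry if and only if $A_\phi^*A_\phi$ is a projection. This holds if and only if $I-A_\phi^*A_\phi=B_\phi^*B_\phi$ is a projection, since a self-adjoint $T$ is idempotent exactly when $I-T$ is. By the same lemma, that is equivalent to $B_\phi$ being a partial isometry. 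This proves (1)$\Leftrightarrow$(2).

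Next, apply \eqref{Eq:relation between D and B} with the pair $(\phi,\bar\phi)$. Since $D_1=I_{K_\theta^\perp}$ and $D_{\bar\phi}=D_\phi^*$, this gives
\[
I_{K_\theta^\perp}-D_\phi D_\phi^* = B_\phi B_\phi^* .
\]
Thus $D_\phi$ is a partial isometry if and only if $D_\phi D_\phi^*$ is a projection (item (4) of Lemma \ref{Lem:partial isometry equivalences}). This holds if and only if $B_\phi B_\phi^*$ is a projection, which in turn holds if and only if $B_\phi$ is a partial isometry. Here I use Lemma \ref{Lem:partial isometry equivalences} for the operator $B_\phi:K_\theta\to K_\theta^\perp$ between two different spaces; that lemma holds verbatim there, or one can pass through $B_\phi^*$ being a partial isometry. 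This gives (2)$\Leftrightarrow$(3).

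The only point needing care is the bookkeeping of which identity to use and with which ordering of symbols. One must check that the product $B_{\bar\phi}^*B_\psi$ with $\phi\to\bar\phi$, $\psi\to\phi$ yields $B_\phi^*B_\phi$, and that $B_\phi B_{\bar\psi}^*$ with $\psi=\bar\phi$ yields $B_\phi B_\phi^*$. One must also note that Lemma \ref{Lem:partial isometry equivalences} applies to operators between distinct Hilbert spaces. Beyond that, the argument is routine.
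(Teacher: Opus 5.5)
Your proposal is correct and is essentially the paper's proof. Both arguments specialize the block-matrix identities to $I_{K_\theta}-A_\phi^*A_\phi=B_\phi^*B_\phi$ and $I_{K_\theta^\perp}-D_\phi D_\phi^*=B_\phi B_\phi^*$, then apply the projection characterization of partial isometries. The symbol bookkeeping, and the fact that this characterization holds for $B_\phi\colon K_\theta\to K_\theta^\perp$ between different spaces, are left implicit in the paper, and you are right to make them explicit.
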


\begin{proof}
Suppose that $\phi\in L^{\infty}(\T)$ with $|\phi|=1$ a.e.\ on $\T$. Now the relations in \eqref{Eq:relation between A and B} and \eqref{Eq:relation between D and B} give
\begin{align*}
I_{K_{\theta}}-A_{\phi}^*A_{\phi} = A_{\Bar{\phi}\phi}-A_{\Bar{\phi}}A_{\phi} &= B_{\phi}^*B_{\phi}\\
I_{K_{\theta}^{\perp}}-D_{\phi}D_{\phi}^* = D_{\phi\Bar{\phi}}-D_{\phi}D_{\Bar{\phi}} &= B_{\phi}B_{\phi}^*.
\end{align*}
It now follows from Lemma \ref*{Lem:partial isometry equivalences} that $A_{\phi}$ is a partial isometry if and only it $B_{\phi}$ is a partial isometry if and only if $D_{\phi}$ is a partial isometry.
\end{proof}

The partially isometric TTOs with inner symbols were recently characterized by Debnath, Pradhan and Sarkar in \cite{DEBNATH-PRADHAN-SARKAR}. Indeed, they proved the following result: 
\begin{thm}
Let $u$ and $\theta$ be non-constant inner functions in $H^\infty$. Then $A_u$ on $K_\theta$ is a partial isometry if and only if $u$ divides $\theta$ or $\theta$ divides $u$.
\end{thm}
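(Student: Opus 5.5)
Since $|u|=1$ a.e., the identity \eqref{Eq:relation between A and B} gives
\[
I_{K_\theta}-A_u^*A_u=B_u^*B_u .
\]
So $A_u$ is a partial isometry exactly when $A_u^*A_u$ is a projection, i.e.\ when $B_u^*B_u$ is a projection. The whole argument therefore reduces to understanding $\ker A_u$ and the norm of $A_u$ on its orthogonal complement. Concretely, $A_u$ is a partial isometry iff $K_\theta=\ker A_u\oplus\{f\in K_\theta:\ uf\in K_\theta\}$, where the second summand is the set on which $A_u$ acts isometrically, since $\|A_uf\|=\|f\|$ iff $P_\theta(uf)=uf$.

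The kernel is computed directly. For $f\in K_\theta$ we have $A_uf=0$ iff $uf\in\theta H^2$. Writing $d=\gcd(u,\theta)$, $u=du_1$ and $\theta=d\theta_1$, this holds iff $\theta_1\mid f$, so
\[
\ker A_u=\theta_1K_d .
\]
Similarly, the isometric set is $\{f\in K_\theta:\ uf\perp\theta H^2\}=\{f:\ f\perp \bar u\theta H^2\cap H^2\}$. Since $\bar u\theta H^2\cap H^2=\bar u_1\theta_1H^2\cap H^2$ and $\gcd(u_1,\theta_1)=1$, this intersection is $\theta_1u_1\bar u_1H^2$, i.e.\ $\theta_1H^2$ when $u_1$ is constant. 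In general the isometric set is $K_\theta\cap K_{u_1\theta_1}$, refined by coprimality.

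For sufficiency, I check the two divisibility cases separately. If $\theta\mid u$, then $A_u=0$, trivially a partial isometry. If $u\mid\theta$, then $d=u$, $u_1=1$, and the isometric set is $K_{\theta_1}$. Since $K_\theta=K_{\theta_1}\oplus\theta_1K_u$, this is exactly the complement of $\ker A_u=\theta_1K_u$, so $A_u$ is a partial isometry.

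For necessity, assume neither divisibility holds, so $u_1$ and $\theta_1$ are both non-constant. Then $\ker A_u^\perp=K_{\theta_1}$, but I claim $A_u$ is not isometric on all of $K_{\theta_1}$. Take a reproducing kernel $k^{\theta_1}_\lambda\in K_{\theta_1}$. Then $uk_\lambda^{\theta_1}=d\,u_1k^{\theta_1}_\lambda$, and we need $u_1k_\lambda^{\theta_1}\in K_{\theta_1}$ for isometry. Since $K_{\theta_1}$ is not invariant under multiplication by a non-constant inner $u_1$ coprime to $\theta_1$, this fails. For instance, $\langle u_1k^{\theta_1}_0,\theta_1\rangle=\overline{\theta_1(0)}\,\ldots$, or more cleanly one uses the fact that $u_1K_{\theta_1}\subset K_{\theta_1}$ forces $\theta_1\mid$ something impossible. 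I expect this step, exhibiting a vector of $\ker A_u^\perp$ on which $\|A_uf\|<\|f\|$ whenever $\gcd(u_1,\theta_1)=1$ with both factors non-constant, to be the main obstacle. I would attempt it via $\|A_uf\|^2=\|f\|^2-\|B_uf\|^2$, with $B_uf=(I-P_\theta)(uf)$, by showing that $P_{\theta H^2}(u\,k_\lambda^{\theta_1})\neq0$ for a suitable $\lambda$. Failing that, I would cite the Debnath–Pradhan–Sarkar argument directly, since the theorem is quoted from \cite{DEBNATH-PRADHAN-SARKAR}.
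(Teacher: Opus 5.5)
Your kernel computation ($\ker A_u=\theta_1K_d$, so $(\ker A_u)^\perp=K_{\theta_1}$ inside $K_\theta$) is correct. So is your implicit reduction that, for $f\in K_{\theta_1}$, $uf\in K_\theta$ iff $u_1f\in K_{\theta_1}$.

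\textbf{The gap: necessity.} The ``only if'' direction, which is the real content of the theorem, is not proved. You need some $f\in K_{\theta_1}$ with $u_1f\notin K_{\theta_1}$ when $u_1$ and $\theta_1$ are both non-constant. You only assert that $K_{\theta_1}$ is not invariant under multiplication by $u_1$, which restates the claim. You then fall back on citing \cite{DEBNATH-PRADHAN-SARKAR}, i.e.\ the theorem itself.

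The reproducing-kernel test you suggest fails for a given $\lambda$. Take $\theta=z^2$ and $u=(z+\lambda)/(1+\bar\lambda z)$ with $0<|\lambda|<1$. Then $\gcd(u,\theta)=1$ and neither divides the other, yet
\[
u\,k^{\theta}_\lambda=u(1+\bar\lambda z)=z+\lambda\in K_\theta,
\]
so $k^\theta_\lambda$ is an extremal vector. Choosing ``a suitable $\lambda$'' is circular: by density, such a $\lambda$ exists only if you already know $u_1K_{\theta_1}\not\subseteq K_{\theta_1}$.

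\textbf{The isometric-set formulas are wrong.} The condition $uf\perp\theta H^2$ means $f\perp P(\bar u\theta H^2)$, not $f\perp\bar u\theta H^2\cap H^2$. That condition, like your alternative $K_\theta\cap K_{u_1\theta_1}$, gives a set containing $K_{\theta_1}$, which would make every $A_u$ a partial isometry. Your sufficiency conclusion for $u\mid\theta$ still holds, because $K_\theta=K_u\oplus uK_{\theta_1}$ gives $uK_{\theta_1}\subseteq K_\theta$ directly.

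\textbf{How to close the gap.} The conjugation certifies non-invariance. For $f\in K_{\theta_1}$ put $g=\theta_1\overline{zf}\in K_{\theta_1}$. Then $\bar\theta_1u_1f=u_1\overline{zg}$, so $u_1f\in K_{\theta_1}$ iff $\bar u_1g\in H^2$ iff $g\in u_1H^2$. Hence $u_1K_{\theta_1}\subseteq K_{\theta_1}$ would force $K_{\theta_1}\subseteq u_1H^2$. This is impossible for non-constant $u_1$, because $K_{\theta_1}$ contains the outer function $k^{\theta_1}_0=1-\overline{\theta_1(0)}\theta_1$. Explicitly, $f=(\theta_1-\theta_1(0))/z\in(\ker A_u)^\perp$ satisfies $\|A_uf\|<\|f\|$. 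Coprimality is not needed for this last step.

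\textbf{Comparison with the paper.} With this step added, your argument would be a complete, elementary proof working entirely inside $K_\theta$. It is genuinely different from the paper's route, which transfers the question to $D_u$ via Lemma~\ref{Lem:A B and D partial isometry equivalences}. The paper then computes $D_u^*D_uD_u^*$ on $\theta H^2$ to show that $T_{\bar u\theta}^*T_{\bar u\theta}$ is a projection, and appeals to the Brown--Douglas theorem \cite{Brown_Douglas}. As written, however, your proposal establishes only the ``if'' direction.
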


However, the following theorem provides a new proof of this characterization by employing dual truncated Toeplitz operators. Our approach is more algebraic in nature and uses the structural properties of DTTOs. It also reveals an interesting interplay between TTOs and DTTOs.

\begin{thm}\label{Thm:Du partial isometry characterization}
Let $\theta$ be a non-constant inner function and $D_u$ be the dual TTO on $K_{\theta}^{\perp}$ corresponding to the inner function $u$. Then the following are equivalent:
\begin{enumerate}
\item $D_u$ is a partial isometry.
\item $u$ divides $\theta$ or $\theta$ divides $u$.
\end{enumerate}
\end{thm}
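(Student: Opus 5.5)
Since $|u|=1$ a.e., $D_u$ is a partial isometry if and only if $D_u^*D_u$ is a projection, equivalently $D_uD_u^*$ is a projection (Lemma \ref{Lem:partial isometry equivalences}). I will compute these products with the identity \eqref{Eq:relation between D and B} and the explicit form $Q_\theta=I-P+M_\theta PM_{\bar\theta}$. With $\phi=\bar u$, $\psi=u$, identity \eqref{Eq:relation between D and B} gives
\[
I-D_{\bar u}D_u=B_{\bar u}B_{\bar u}^*,
\]
and with $\phi=u$, $\psi=\bar u$ it gives
\[
I-D_uD_{\bar u}=B_uB_u^*.
\]
So $D_u$ is a partial isometry if and only if $B_uB_u^*$ is a projection, that is, if and only if $B_u$ is a partial isometry. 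This is just Lemma \ref{Lem:A B and D partial isometry equivalences}. The point is to exploit the DTTO side concretely.

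First I compute $D_u$ directly on $K_\theta^\perp=\overline{zH^2}\oplus\theta H^2$. For $f=g+\theta h$ with $g\in (H^2)^\perp$ and $h\in H^2$, we have $uf=ug+u\theta h$. The term $u\theta h$ lies in $\theta H^2$, so $D_u(\theta h)=\theta uh$, which is isometric on $\theta H^2$. For the other piece,
\[
D_u g=(I-P)(ug)+\theta P(\bar\theta P(ug)) = H_u g + \theta T_{\bar\theta}P(ug),
\]
where I abuse notation and write $H_u g$ for the projection to $(H^2)^\perp$. Thus, relative to the decomposition $(H^2)^\perp\oplus\theta H^2$, $D_u$ is lower triangular. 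Its diagonal entries are the compression of $M_u$ to $(H^2)^\perp$, which is unitarily equivalent (via $C$ or the flip) to $T_{\bar u}$-type adjoint behaviour, and the isometry $T_u$ transported to $\theta H^2$. The off-diagonal entry is $X g=\theta P(\bar\theta P(ug))$.

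Next I compute $D_u^*D_u$ in this block form. The $(2,2)$ entry is the identity, so $D_u^*D_u$ is a projection exactly when the column $\theta h\mapsto$ stays orthogonal and the restriction to $(H^2)^\perp$ is a partial isometry with range orthogonal to $\theta uH^2$. Concretely, for $g\in(H^2)^\perp$ we have $\|D_ug\|^2=\|(I-P)(ug)\|^2+\|P_{\theta H^2}P(ug)\|^2$, while $\|ug\|^2=\|g\|^2$. Hence the defect is $\|P_\theta(ug)\|^2$, which is $\|B_{\bar u}^*$-type$\|^2$.

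The partial isometry condition thus reduces to the following: the operator $g\mapsto P_\theta(ug)$ from $(H^2)^\perp$ to $K_\theta$ (which is $C_u$ restricted) must be a partial isometry, and the range condition must hold. Writing $g=\overline{z k}$ and applying the conjugation $C$, this becomes a statement about $P_\theta(\theta\bar u \cdot)$ acting on $H^2$, that is, about $A$-type or Hankel operators with symbol $\theta\bar u$. The key fact is that the Hankel-type map $k\mapsto P(\theta\bar u k)$ composed with $P_\theta$ is a partial isometry exactly when $\theta\bar u$ is either in $H^\infty$ (so $u\mid\theta$, when the map is $T_{\theta\bar u}$ followed by $P_\theta$, with image $K_{\theta\bar u}$-controlled) or in $\overline{H^\infty}$ (so $\theta\mid u$, when the map is $0$). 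For (2)$\Rightarrow$(1), I will simply verify these two cases. If $\theta\mid u$, then $uK_\theta^\perp$-pieces land in $K_\theta^\perp$, so the defect vanishes on a subspace and $D_u$ is an isometry on the relevant part. If $u\mid\theta$, the defect operator is the projection onto the preimage of $K_{\theta/u}$-type subspaces, which I check to be a projection.

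The main obstacle is (1)$\Rightarrow$(2): showing that the projection condition forces divisibility. My plan is to test on reproducing kernels. I take $g=\overline{z k_\lambda}$-type vectors and apply $D_u^*D_u g=g$ or $D_u^*D_u g=0$ for $g$ in the two eigenspaces. I then use the inner–outer/greatest common divisor factorization $u=w u_1$, $\theta=w\theta_1$ with $\gcd(u_1,\theta_1)=1$. The partial isometry condition should force the identity $P_\theta(\bar u_1\theta_1\cdots)$ to be idempotent, which is possible only if $u_1$ or $\theta_1$ is constant. To rule out the mixed case, I expect to argue via Sedlock's Lemma \ref{prod}: $A_{\bar u}A_u$ would be a projection and also a TTO, with $A_u\in\mathscr{B}_0^\theta$. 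This forces either $A_u$ to be a scalar multiple of the identity or $A_{\bar u}\in\mathscr{B}_0^\theta$ (i.e. analytic), and these translate respectively into $\theta\mid u$ and $u\mid\theta$.
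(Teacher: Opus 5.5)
Your reduction is sound. $D_u$ is isometric on $\theta H^2$, and $I-D_u^*D_u=B_{\bar u}B_{\bar u}^*=C_u^*C_u$ with $C_u$ vanishing on $\theta H^2$. So $D_u$ is a partial isometry iff $g\mapsto P_\theta(ug)$ is one on $(H^2)^\perp$. Your ``range condition'' is automatic, since $\langle \theta P(\bar\theta ug),\theta uk\rangle=\langle g,\theta k\rangle=0$. Conjugating by $C$, this holds iff $k\mapsto P_\theta T_{\theta\bar u}k$ is a partial isometry on $H^2$.

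\textbf{The gap.} Your ``key fact'' --- that $P_\theta T_{\theta\bar u}$ is a partial isometry only when $\theta\bar u$ or $u\bar\theta$ is inner --- is exactly the implication (1)$\Rightarrow$(2), and you do not prove it. The reproducing-kernel/gcd idea is stated only as something that ``should force'' the result, so it is not an argument. The Sedlock route fails for two reasons.
\begin{enumerate}
\item Lemma \ref{prod} needs $A_{\bar u}A_u\in\mathscr{T}_\theta$, and this is false in general. Take $\theta=z^2$, $u=z$, a case where $D_u$ \emph{is} a partial isometry. In the basis $\{1,z\}$ of $K_{z^2}$, $A_{\bar z}A_z=\mathrm{diag}(1,0)$, which is not a $2\times 2$ Toeplitz matrix, whereas every TTO on $K_{z^2}$ is one.
\item Even if the lemma applied, $A_{\bar u}\in\mathscr{B}^\theta_\infty\cap\mathscr{B}^\theta_0$ forces $A_{\bar u}$ to be scalar. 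So that dichotomy could never produce the case $u\mid\theta$.
\end{enumerate}

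\textbf{The missing idea.} Compare your operator with the full Toeplitz operator $T_{\theta\bar u}$. Split $H^2=K_\theta\oplus\theta H^2$ and use $T_{\bar\theta}T_{\theta\bar u}=T_{\bar u}$ to get
\[
T_{\theta\bar u}^*T_{\theta\bar u}=(P_\theta T_{\theta\bar u})^*(P_\theta T_{\theta\bar u})+T_uT_{\bar u}.
\]
Here $T_uT_{\bar u}$ is the projection onto $uH^2$, on which $T_{\theta\bar u}$ acts isometrically. Hence $P_\theta T_{\theta\bar u}$ is a partial isometry iff $T_{\theta\bar u}$ is, and Brown--Douglas settles both directions. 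This is essentially the paper's route. The paper checks that $D_u^*D_uD_u^*=D_u^*$ holds automatically on $(H^2)^\perp$. On $\theta H^2$ it reduces the identity to $(T^*_{\bar u\theta}T_{\bar u\theta})^2=T^*_{\bar u\theta}T_{\bar u\theta}$, then invokes Brown--Douglas.

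\textbf{An error in (2)$\Rightarrow$(1).} Your sketch is wrong when $\theta\mid u$. Write $u=\theta w$; then the map is $P_\theta T_{\bar w}$, not $0$ (for $u=\theta$ it is $P_\theta$). Also $uK_\theta^\perp\not\subseteq K_\theta^\perp$ in general. The map is still a partial isometry, because its adjoint $T_wP_\theta$ is isometric on $K_\theta$ and zero on $\theta H^2$.
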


\begin{proof}
Note that $K_{\theta}^{\perp} = \theta H^2 \oplus {H^{2}}^{\perp}$. Now for any $h \in H^2$
we have
\begin{align*}
D_u^*D_uD_u^*(\theta h) &= D_u^*D_u(M_\theta P M_{\Bar{\theta}}(\Bar{u}\theta h)+(I-P)(\Bar{u}\theta h))\\
			&= D_u^*D_u(M_{\theta} P(\Bar{u}h) + (I-P)(\Bar{u}\theta h)\\
			&= D_u^*D_u(\theta T_{u}^*h+\Bar{u}\theta h-P(\Bar{u}\theta h))\\
			&= D_u^*(M_{\theta} PM_{\Bar{\theta}}(u\theta T_{u}^*h+\theta h-uP(\Bar{u}\theta h))+(I-P)(u\theta T_{u}^*h+\theta h-uP(\Bar{u}\theta h))\\
			&= D_u^*(M_{\theta} P(uT_{u}^*h+h-u\Bar{\theta}P(\Bar{u}\theta h)))\\
			&= D_u^*(u\theta T_{u}^*h+\theta h-\theta T_{\Bar{u}\theta}^*T_{\Bar{u}\theta}h)\\
			&= M_{\theta}PM_{\Bar{\theta}}(\theta T_{u}^*h+\Bar{u}\theta h-\Bar{u}\theta T_{\Bar{u}\theta}^*T_{\Bar{u}\theta}h)+(I-P)(\theta T_{u}^*h+\Bar{u}\theta h-\Bar{u}\theta T_{\Bar{u}\theta}^*T_{\Bar{u}\theta}h)\\
			&= \theta T_{u}^*h+\theta T_{u}^{*}h-\theta T_{u}^*T_{\Bar{u}\theta}^*T_{\Bar{u}\theta}h + (I-P)(\Bar{u}\theta h-\Bar{u}\theta T_{\Bar{u}\theta}^*T_{\Bar{u}\theta}h)\\
			&= \theta T_{u}^*h+(I-P)(\Bar{u}\theta h-\Bar{u}\theta T_{\Bar{u}\theta}^*T_{\Bar{u}\theta}h).
\end{align*}
Also, for $g\in H^2$ we have 
\[
D_u^*D_uD_u^*(\overline{zg})=D_u^*D_uQ_{\theta}(\overline{zug})=D_u^*D_u(\overline{zug})=D_u^*Q_{\theta}(\overline{zg})=D_u^*(\overline{zg}).
\]
Thus, $D_u$ is a partial isometry if and only if $D_u^*D_uD_u^*f=D_u^*f$ for all $f\in K_{\theta}^{\perp}$ if and only if $D_u^*D_uD_u^*(\theta h)=D_u^*(\theta h)$ for all $h \in H^2$. 
That is,
\[
\theta T_{u}^*h+(I-P)(\Bar{u}\theta h-\Bar{u}\theta T_{\Bar{u}\theta}^*T_{\Bar{u}\theta}h)=\theta T_{u}^*h+(I-P)(\Bar{u}\theta h) \quad (h \in H^2).
\]
Therefore, for all $h\in H^2$
\begin{align*}
(I-P)(\Bar{u}\theta T_{\Bar{u}\theta}^*T_{\Bar{u}\theta}h) &=0\\
			\Bar{u}\theta T_{\Bar{u}\theta}^*T_{\Bar{u}\theta}h&=P(\Bar{u}\theta T_{\Bar{u}\theta}^*T_{\Bar{u}\theta}h)\\
			T_{\Bar{u}\theta}^*T_{\Bar{u}\theta}h &= u\Bar{\theta}T_{\Bar{u}\theta} T_{\Bar{u}\theta}^*T_{\Bar{u}\theta}h\\
			T_{\Bar{u}\theta}^*T_{\Bar{u}\theta}h &= T_{\Bar{u}\theta}^*T_{\Bar{u}\theta}T_{\Bar{u}\theta}^*T_{\Bar{u}\theta}h=(T_{\Bar{u}\theta}^*T_{\Bar{u}\theta})^2h.
\end{align*}
Hence $T_{\Bar{u}\theta}^*T_{\Bar{u}\theta}$ is a projection operator on $H^2$. Consequently, by Lemma \ref{Lem:partial isometry equivalences}, $T_{\Bar{u}\theta}$ is a partial isometry on  $H^2$. Therefore, it follows from Brown-Douglas \cite{Brown_Douglas} characterization of partially isometric Toeplitz operators that either $\Bar{u}\theta$ or $\Bar{\theta}u$ is inner function. That is, either $u | \theta$ or $\theta | u$.
This completes the proof.
\end{proof}

Next we will give a representation for symbols $\phi$ of partially isometric TTO (or DTTO) with $\|\phi \|_{\infty} \leq 1$. This may be known to the experts but here we give a proof for the sake of completeness. Before that, we are stating a lemma which will be useful in the proof. 
 	
\begin{lem}\cite[Corollary 3, pp.\ 12]{NIK}.\label{Lem:inner outer factorization}
Let $f\in H^2$. Then the following are equivalent:
		\begin{enumerate}
			\item $f$ is an outer function.
			\item If $g\in H^2, g/f \in L^2(\T)$ then $g/f \in H^2.$
\end{enumerate}
\end{lem}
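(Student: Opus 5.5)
This is the classical Smirnov-type characterization of outer functions, cited from Nikolskii. The plan is to prove each implication separately, using only inner–outer factorization in $H^2$ and the fact that $H^1\cap L^2(\T)\cdot$-type products behave well on the circle.

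For (1)$\Rightarrow$(2), suppose $f$ is outer and $g\in H^2$ with $h:=g/f\in L^2(\T)$. Write $g=I_gF_g$ with $I_g$ inner and $F_g$ outer. The quotient $F_g/f$ is a quotient of two outer functions, so it is of the form $\exp$ of a Herglotz-type integral of $\log|F_g|-\log|f|\in L^1(\T)$. That is, it belongs to the Smirnov class $N^+$ and is itself outer. Its boundary modulus is $|h|\in L^2(\T)$. Smirnov's theorem ($N^+\cap L^2(\T)=H^2$ on the boundary) then gives $F_g/f\in H^2$. Multiplying by the bounded $I_g$, we get $g/f\in H^2$.

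To avoid citing Smirnov's theorem, I would argue directly. Since $f$ is outer, the polynomial multiples $pf$ are dense in $H^2$; this is Beurling's theorem. Then I would show that $h$ is orthogonal to $\overline{z}\,\overline{H^2}$, i.e. that $\hat h(-n)=0$ for $n\ge1$.

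This direct route is the main obstacle, because pairing $h$ with $\overline{z^n}$ requires approximating by functions whose products with $h$ stay integrable. The concrete step I would try is the standard one. Choose bounded outer functions $f_k\in H^\infty$ with $|f_k|=\min(1,|f|)$-type truncations, namely the outer function with modulus $\min(|f|,k)/|f|\cdot$ adjusted. Then $f_k/f\in H^\infty$ is outer and bounded, $hf_k\cdot(\text{stuff})\in H^2$, and I let $k\to\infty$. Concretely, let $\omega_k$ be outer with $|\omega_k|=\min(1,k/|h|)$. Then $\omega_k h=\omega_k g/f$ is bounded in modulus by $k$. It lies in $N^+$ as a quotient of the $N^+$ function $\omega_k g$ by the outer $f$, hence in $H^\infty$ by Smirnov for bounded functions (equivalently, the maximum principle for outer denominators). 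Since $\omega_k\to1$ boundedly a.e., and $\omega_k$ is outer so $\omega_k(0)\to1$, we get $\omega_k h\to h$ in $L^2$. Hence $h\in H^2$.

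For (2)$\Rightarrow$(1), write $f=IF$ with $I$ inner and $F$ outer. Take $g=F\in H^2$. Then $g/f=\overline{I}$ on $\T$, which is in $L^2(\T)$, so by hypothesis $\overline{I}\in H^2$. An inner function whose conjugate is analytic is constant: both $I$ and $\overline I$ lie in $H^2$, so $I$ is constant. Hence $f$ is outer. This direction is routine; the only delicate point of the whole proof is the $N^+\cap L^2\subset H^2$ step in the first implication.
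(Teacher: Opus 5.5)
The paper does not prove this lemma; it only cites Nikolski, so there is no in-paper argument to compare with. Your proof is correct, and it is the standard one.

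For (1)$\Rightarrow$(2), $F_g/f$ is outer with boundary modulus $|g/f|\in L^2(\T)$, so it lies in $H^2$. You can make the Smirnov step self-contained. For an outer $F$, Jensen's inequality gives $|F(z)|^2=\exp\bigl(\int_\T\log|F|^2\,d\mu_z\bigr)\le\int_\T|F|^2\,d\mu_z$, where $\mu_z$ is harmonic measure for $z$. Integrating over the circles $|z|=r$ bounds the $H^2$ means by $\|F\|_{L^2(\T)}^2$.

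For (2)$\Rightarrow$(1), the choice $g=F$ gives $g/f=\overline{I}$ on $\T$. Hence $\overline{I}\in H^2\cap\overline{H^2}=\C$, so $I$ is constant. Tacitly $f\not\equiv 0$, and the case $g\equiv 0$ is trivial.

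The ``direct route'' paragraph should be cut or rewritten, for three reasons.
\begin{enumerate}
\item It does not avoid Smirnov's theorem. Concluding $\omega_k g/f\in H^\infty$ from $|\omega_k g/f|\le k$ is the $p=\infty$ case of the same fact.
\item The convergence claim is overstated. $\omega_k(0)\to 1$ alone does not give $\omega_k\to 1$ boundedly a.e. What you actually get is $\|\omega_k-1\|_2^2\le 2\bigl(1-\mathrm{Re}\,\omega_k(0)\bigr)\to 0$. That gives a.e.\ convergence only along a subsequence, which is still enough.
\item The Beurling approach you set aside has no obstacle. Take polynomials $p_n$ with $p_nf\to 1$ in $H^2$. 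Then $p_ng=(p_nf)h\in H^2$, and by Cauchy--Schwarz $\|p_ng-h\|_1\le\|p_nf-1\|_2\,\|h\|_2\to 0$. So each negative Fourier coefficient of $h$ is a limit of zeros, and $h\in H^2$.
\end{enumerate}
The Beurling version uses only the cyclicity of outer functions and no Smirnov class. Your main version uses only the Poisson representation of outer functions and no Beurling theorem. Either one alone settles (1)$\Rightarrow$(2).
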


\begin{lem}\label{Characterisation}
Let $\phi\in L^{\infty}(\T)$ with $\|\phi\|_{\infty}\leq 1$. Suppose $A_{\phi}$ is a non-zero partially isometric truncated Toeplitz operator on $K_{\theta}$. Then 
$\phi=\overline{u}v$ for some inner functions $u$ and $v$ in $H^{\infty}$. In particular, if $\phi\in H^{\infty}$ with $\|\phi\|_{\infty}\leq 1$, then $\phi$ is inner.
\end{lem}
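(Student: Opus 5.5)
The plan is to use the defining property of a partial isometry on a single norm-attaining vector, and then read off the form of $\phi$ from the equality case together with the inner--outer factorization. Since $A_{\phi}\neq 0$ is a partial isometry, $\cln(A_\phi)^{\perp}\neq\{0\}$. So I pick $f\in \cln(A_\phi)^\perp\subseteq K_\theta$ with $\|f\|=1$, which gives $\|A_\phi f\|=1$. Then
\[
1=\|P_\theta(\phi f)\|\le \|\phi f\|_{2}\le \|\phi\|_\infty\|f\|_2\le 1 .
\]
All these inequalities must be equalities, and I would extract two facts from this.

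First, $\|P_\theta(\phi f)\|=\|\phi f\|$ forces $\phi f\in K_\theta\subseteq H^2$. Set $g:=\phi f=A_\phi f\in H^2$. Second, $\int_{\T}(1-|\phi|^2)|f|^2\,dm=0$ with $|\phi|\le 1$ gives $|\phi|=1$ a.e.\ on the set where $f\neq 0$. Because $f$ is a non-zero element of $H^2$, it vanishes only on a null subset of $\T$. Hence $|\phi|=1$ a.e.\ on $\T$, and consequently $|g|=|f|$ a.e.

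Next I factor $f=I_fF$ and $g=I_gG$ into inner and outer parts. On $\T$ we have $|G|=|g|=|f|=|F|$, so $G/F$ is unimodular and in particular lies in $L^2(\T)$. By Lemma \ref{Lem:inner outer factorization}, applied to the outer function $F$, we get $G/F\in H^2$. A unimodular $H^2$ function is inner, and the same argument with the roles of $F$ and $G$ exchanged shows that $F/G$ is inner as well. An inner function whose reciprocal is also inner is a unimodular constant $c$, so $G=cF$. Therefore
\[
\phi=\frac{g}{f}=\frac{cI_gF}{I_fF}=\overline{I_f}\,(cI_g)\quad \text{a.e. on } \T,
\]
using $1/I_f=\overline{I_f}$ on $\T$. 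Taking $u=I_f$ and $v=cI_g$ gives $\phi=\overline{u}v$ with $u,v$ inner.

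For the \emph{in particular} statement: if $\phi\in H^\infty$ with $\|\phi\|_\infty\le 1$, the argument above already shows $|\phi|=1$ a.e.\ on $\T$, so $\phi$ is inner.

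I expect the only delicate step to be the passage from the norm equality to $|\phi|=1$ a.e.\ on all of $\T$. This relies on the fact that a non-zero $H^2$ function cannot vanish on a set of positive measure (F.\ and M.\ Riesz). The rest is the standard outer-function division argument supplied by Lemma \ref{Lem:inner outer factorization}.
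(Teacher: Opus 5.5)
Your proof is correct and takes essentially the same route as the paper. Both pick a norm-attaining vector $f$, force equality in the chain of norm inequalities, use F.\ and M.\ Riesz to get $|\phi|=1$ a.e., and finish with Lemma~\ref{Lem:inner outer factorization}. The one difference is that the paper applies that lemma directly to $\phi f\in H^2$ and the outer factor $F$ of $f$, so $\phi I_f\in H^2$ is unimodular, hence inner, and one takes $u=I_f$, $v=\phi I_f$; this makes your second factorization of $g$ and the argument that $G=cF$ unnecessary, though both are correct.
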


\begin{proof}
Since $A_\phi$ is non-zero, there exists $0 \neq f \in K_{\theta}\ominus \cln(A_\phi)$ such that $\|A_{\phi}f\|=\|f\|.$ Note that
\[
\|f\|=\|A_\phi f\|=\|P_\theta(\phi f)\|\leq \|\phi f\|\leq \|\phi\|_\infty \|f\|\leq \|f\|.
\]
Thus 
\[
\|f\|=\|\phi f\|=\|P_{\theta}(\phi f)\|.
\]		
By a well-known theorem of the Riesz brothers (see \cite[Chapter 6]{RD}), $f\neq 0$ a.e.\ on $\T$, so that $\|f\|=\|\phi f\|$ implies $|\phi|=1$ a.e.\ on $\T$. Moreover, 
\[
\|\phi f\|=\|P_\theta(\phi f)\| \iff \phi f=P_{\theta}(\phi f).
\]
Therefore, $\phi f\in K_{\theta}$. Let $f=f^{i}f^{o}$ be the inner-outer factorization of $f$. Now Lemma \ref{Lem:inner outer factorization} infers that $\phi f^i$ is inner in $H^2$, say $v$, and take $u=f^i$ then $\phi=\overline{u}\,v$, where $u$ and $v$ are inner functions. 

Clearly, if $\phi\in H^{\infty}$ with $\|\phi\|_{\infty}\leq 1$, then from the above $|\phi|=1$ a.e.\ on $\T$ and hence $\phi$ is inner. 
\end{proof}

In the following theorem, we characterize the partially isometric DTTO (and hence TTO) $D_\phi$ where $\phi=\bar{u}\theta$ for some inner function $u$. Note that if $\bar{u}\theta$ is co-analytic, then Theorem \ref{Thm:Du partial isometry characterization} will provide the required characterization. So for the following theorem, we will assume that $\bar{u}\theta$ is not co-analytic.

\begin{thm}
Let $\theta, u$ be non-constant inner functions such that $\bar{u}\theta$ is not co-analytic. Then $D_{\bar{u} \theta}$ is a partial isometry if and only if $u$ divides $\theta$ or $\theta^2$ divides $u-c$ for some constant c. 
\end{thm}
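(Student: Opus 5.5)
By Lemma \ref{Lem:A B and D partial isometry equivalences}, since $|\bar u\theta|=1$ a.e., $D_{\bar u\theta}$ is a partial isometry if and only if $A_{\bar u\theta}$ is. So I will work entirely on $K_\theta$ with the truncated Toeplitz operator. The first step is to remove the part of the symbol that $A$ does not see.

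Write $u=\sum_{n\ge0} a_n z^n$ and split $u=u_1+\theta u_2$ with $u_1=P_\theta u\in K_\theta$ and $u_2\in H^2$. Then
\[
\bar u\theta=\theta\bar u_1+\overline{u_2}.
\]
The term $\theta\overline{u_1}$ is, up to a shift, the conjugate $\tilde u_1=\theta\overline{z u_1}$ multiplied by $z$. Since $\overline{\theta H^2}$ gives the zero TTO, only the anti-analytic part coming from $K_\theta$ and the analytic part $z\tilde u_1$ survive. Concretely, $A_{\bar u\theta}=A_{\theta\bar u_1+\overline{c}}$ up to a constant, where $c=u_2(0)$. It is cleaner to note that $A_{\bar u\theta}=A_{\bar u\theta}$ with
\[
A_{\bar u\theta}^*=A_{u\bar\theta}=A_{\bar\theta(u_1+\theta u_2)}=A_{\bar\theta u_1}+A_{u_2}.
\]
Moreover, $A_{\bar\theta u_1}=0$, because $\bar\theta u_1\in\overline{zK_\theta}$ annihilates $K_\theta$ after projection. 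Hence $A_{\bar u\theta}^*=A_{u_2}$ with $u_2=P(\bar\theta u)$ analytic. (I would double-check this carefully, because it turns the problem into one about analytic-symbol TTOs.) So $A_{\bar u\theta}$ is a partial isometry if and only if $A_{g}$ is, where $g=T_{\bar\theta}u\in H^\infty$ and $\|g\|_\infty\le1$.

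Next I apply Lemma \ref{Characterisation}. If $A_g\ne0$ and it is a partial isometry, the symbol $g$ may be altered modulo $\theta H^\infty$. The lemma as stated needs $\|\phi\|\le1$, which $g$ satisfies, so some symbol of $A_g$ is inner. The cleaner route, however, is Sedlock. The operator $A_g$ lies in $\mathscr B_0^\theta$ (analytic symbols), so $A_g^*A_g=A_{\bar g}A_g$ is a projection and in particular a TTO, namely the identity minus something. Since $A_gA_g^*A_g=A_g$, Lemma \ref{prod} applied to the product $A_g\cdot(A_g^*A_g)$ forces one of the following: $A_g$ is a scalar, or $A_g^*A_g\in\mathscr B_0^\theta$. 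In the latter case $A_g^*A_g$ is simultaneously in $\mathscr B_\infty^\theta$ (being selfadjoint) and in $\mathscr B_0^\theta$, hence a scalar multiple of the identity, i.e.\ $0$ or $I$. This gives the following cases.
\begin{enumerate}
\item $A_g=0$. Then $g\in\theta H^2$, i.e.\ $u\in K_\theta+\theta^2H^2$, which is the case where $\bar u\theta$ is essentially co-analytic, or $u$ divides $\theta$. This requires an argument via Theorem \ref{Thm:Du partial isometry characterization}.
\item $A_g$ is a scalar $c$ of modulus $1$. Then $\bar\theta u-c\in\theta H^2$ modulo $K_\theta$, i.e.\ $u-c\theta\in K_\theta\oplus\theta^2H^2$.
\item $A_g$ is an isometry on $K_\theta$, and symmetrically, via $C$-symmetry, a co-isometry. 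Then it is unitary.
\end{enumerate}
In the unitary case I use Lemma \ref{typealphachar} with $\alpha=0$: $A_g$ is an isometry with analytic symbol whose powers behave multiplicatively. Unitarity then forces $g\equiv c+\theta h$, again a scalar.

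The main obstacle is translating these operator-level conclusions back into the stated divisibility conditions ``$u\mid\theta$ or $\theta^2\mid u-c$''. In particular, I must show that $A_{P(\bar\theta u)}=cI$ with $|c|=1$ forces $u_1=P_\theta u$ to vanish, so that $\theta^2\mid u-c\theta$. The stated conclusion has $u-c$, so the bookkeeping of which constant appears, together with the exclusion of co-analytic $\bar u\theta$, needs care. For that step I would compare norms: $\|u\|_2=1=\|u_1\|^2+\|u_2\|^2$, and $|c|=1$ forces $\|u_2\|\ge1$, hence $u_1=0$. Conversely, in each case I would verify the partial isometry directly: a unimodular scalar, or the case $u\mid\theta$, where $\bar u\theta$ is inner and the known result applies.
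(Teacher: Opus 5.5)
\textbf{There is a genuine gap, and it starts in your reduction step.} The claim that $A_{\bar\theta u_1}=0$ for $u_1\in K_\theta$ is false. A truncated Toeplitz operator vanishes when its symbol lies in $\theta H^2+\overline{\theta H^2}$. Your symbol $\bar\theta u_1=\overline{z\tilde u_1}$ lies in $\overline{zK_\theta}$ instead, and that is exactly the co-analytic part of Sarason's standard symbol, which the compression does see. In fact $A_{\bar\theta u_1}=0$ only when $u_1$ is constant.

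A concrete counterexample: take $\theta=z^2$ and $u=z$, which satisfy the hypotheses since $\bar u\theta=z$. Then $u_1=z$ and $u_2=0$, so your formula gives $A_{\bar u\theta}^*=A_{u_2}=0$. But $A_{\bar u\theta}=A_z=S_\theta\neq0$; indeed $A_{\bar z}z=1$ on $K_{z^2}$. So the problem does not reduce to the analytic symbol $g=P(\bar\theta u)$, and everything built on $g$ concerns the wrong operator.

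\textbf{The Sedlock step fails independently.} $A_g^*A_g$ being a projection does not make it a truncated Toeplitz operator, so Lemma \ref{prod} cannot be applied to $A_g\cdot(A_g^*A_g)$. For example, on $K_{z^2}$ one has $A_z^*A_z=\mathrm{diag}(1,0)$ in the basis $\{1,z\}$, which is not a Toeplitz matrix. Also, self-adjoint TTOs do not lie in $\mathscr{B}^\theta_\infty$. Your resulting trichotomy ($A_g$ zero, a unimodular scalar, or unitary) is refuted by this same $A_z$ on $K_{z^2}$. It is a nonzero, non-unitary partial isometry, and it is precisely the case $u\mid\theta$ you need to produce (Theorem \ref{Thm:Du partial isometry characterization}).

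\textbf{The missing idea is a genuine TTO product to which Sedlock's lemma applies.} The paper obtains it from extremal vectors:
\begin{itemize}
\item If $f\in\cln(A_{\bar u\theta})^\perp$, then $\|A_{\bar u\theta}f\|=\|f\|$ forces $\bar u\theta f\in K_\theta$, so $T_u^*f=T_\theta^*(\bar u\theta f)=0$.
\item Hence $\cln(A_{\bar u\theta})^\perp\subseteq\cln(A_{\bar u})$, which gives $A_{\bar u\theta}A_u=0$.
\item Since $A_u\in\mathscr{B}_0^\theta$, Lemma \ref{prod} leaves two options. Either $A_u$ is a scalar, which is ruled out by non-co-analyticity or by $A_{\bar u\theta}\neq0$. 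Or $A_{\bar u\theta}\in\mathscr{B}_0^\theta$.
\item In the second case, Lemma \ref{typealphachar} gives an inner $\psi$ with $\psi\mid\theta$ and $A_{\bar u\theta}=A_\psi$.
\item Comparing symbols yields $u=\bar\psi\theta+\theta^2h_2$. So $A_u=A_{\bar\psi\theta}$ is a nonzero partial isometry, and therefore $u\mid\theta$.
\end{itemize}

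\textbf{Where the constant $c$ comes from.} The condition $\theta^2\mid u-c$ arises from the zero case, not from a unimodular-scalar case. If $A_{\bar u\theta}=0$, then $\bar u\theta=\theta h_1+\overline{\theta h_2}$, so $u=\overline{h_1}+\theta^2h_2$ with $h_1$ constant. Your bookkeeping instead produced $u=c\theta+\theta^2h$, which would make $\bar u\theta$ co-analytic and is excluded by hypothesis. For the same reason, the converse in that case is simply that the zero operator is a partial isometry.
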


\begin{proof}
First note that if $D_{\bar{u}\theta}$ is a partial isometry, then $A_{\bar{u}\theta}$ is a partial isometry by Lemma \ref{Lem:A B and D partial isometry equivalences}. Now if $A_{\bar{u}\theta} = 0$, then there exist $h_1, h_2 \in H^2$ such that
$\bar{u}\theta= \theta h_1 +\overline{\theta h_2}$.
Thus
\[
{u}=\overline{h_1}+{\theta^2 h_2}
\]
which implies that $h_1$ is a constant, say $c$. Therefore, $\theta^2 |(u-c)$.
		
Now if $A_{\bar{u} \theta}$ is a non-zero partial isometry, then there exists $0 \neq f \in \cln(A_{\bar{u}\theta})^{\perp}$ such that $\|A_{\bar{u}\theta} \| = \|f \|$.
This implies that $\bar{u}\theta f \in K_\theta$ and hence
\[
T_\theta^* (\bar{u}\theta f)=0.
\]
Now
\[
0=T_\theta^* (\bar{u}\theta f)= T_u^* f, \quad \mbox{and hence} \quad A_{\bar{u}} f =0.
\]
Thus, $\cln (A_{\bar{u}\theta})^\perp \subseteq \cln (A_{\bar{u}})$ which implies 	$\clr(A_u) \subseteq \cln(A_{\bar{u}\theta})$. Consequently, $A_{\bar{u}\theta} A_u=0$. Note that $A_{u}$ is of type $0$. So, by Lemma \ref{prod}, there may have three cases:

\NI		
\textit{Case I:} $A_{u}=0$.

It follows that $u\in\theta \h$ but this  will imply that $\bar{u}\theta$ is co-analytic which is not the case.
\vspace{.1cm}
	
\NI		
\textit{Case II:} $A_{u}= \alpha I$ for some $\alpha \neq 0$.

In this case, we get $A_{\bar{u}\theta}=0$ which is not so.
\vspace{.1cm}

\NI		
\textit{Case III:} $A_{\bar{u}\theta}$ is of type $0$.

It follows by Lemma \ref{typealphachar} that $A_{\bar{u}\theta}=A_\psi$ for some $\psi \in H^\infty$ such that $\|\psi\|_\infty=\|A_{\bar{u}\theta}\|=1$. Thus $A_{\psi}$ is a non-zero partial isometry and hence $\psi$ is inner. Now Theorem \ref{Thm:Du partial isometry characterization} yields either $\theta | \psi$ or $\psi | \theta$. If $\theta | \psi$, then clearly $A_\psi=0$ which is not possible as $A_{\bar{u}\theta}$ is non-zero. Therefore, $\psi | \theta$, i.e., $\theta=\psi w$ for some inner function $w$.
Again $A_{\bar{u}\theta}=A_\psi$ implies that there exist $h_1, h_2 \in \h$ such that
\[
\bar{u}\theta =\psi +\theta h_1 +\overline{\theta h_2}.
\]
Therefore,
\begin{align} \label{eq1}
u &= \bar{\psi}\theta +\overline{h_1} +\theta^2 h_2.
\end{align}
Since $u, \bar{\psi}\theta,$ and $\theta ^2 h_2 \in \h$, $\overline{h_1} \in \h$. 
Hence $h_1$ is constant, say $\beta$. Now (\ref{eq1}) implies 
\begin{align}\label{eq2}
u\psi=\theta + \bar{\beta} \psi +\theta^2 \psi h_2.
\end{align}
Note that 
\[
A_{\psi u}=A_\psi A_u=A_{\bar{u}\theta}A_u=0. 
\]
Then considering the corresponding TTO in (\ref{eq2}) we get $\bar{\beta} A_\psi=0$. Consequently, $\beta=0$. Now from the equation (\ref{eq1}), we get $u=w +\theta^2 h_2$. Again considering the corresponding TTO, we have $A_u=A_w$. Since $w | \theta$, $A_w$ is a partial isometry and hence $A_u$ is so. Also $A_u$ is non-zero, thus $u | \theta$.

Conversely, if $\theta^2|u-c$ for some constant $c$, then $A_{\bar{u}\theta}=0$ and hence a partial isometry. If $u|\theta$, then $\bar{u}\theta$ is inner and also divides 
$\theta$. Therefore, by Theorem \ref{Thm:Du partial isometry characterization},  $D_{\bar{u}\theta}$ is a partial isometry.
This completes the proof.		
\end{proof}

\begin{cor}
If $A_{\bar{u}\theta}$ is a non-zero partial isometry, then $\bar{u}\theta$ is either analytic or co-analytic.
\end{cor}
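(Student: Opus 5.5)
The corollary should follow by splitting according to whether $\bar{u}\theta$ is co-analytic and reading off the preceding theorem. Throughout, $\theta$ and $u$ are the inner functions of the theorem; if $u$ is constant, then $\bar{u}\theta$ is a unimodular multiple of $\theta$ and hence analytic, so we may assume $u$ is non-constant.

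If $\bar{u}\theta$ is co-analytic, there is nothing to prove. Otherwise the hypotheses of the theorem are satisfied, so it remains to convert the partial isometry condition on $A_{\bar{u}\theta}$ into one on $D_{\bar{u}\theta}$. Since $|\bar{u}\theta|=1$ a.e.\ on $\T$, Lemma \ref{Lem:A B and D partial isometry equivalences} shows that $A_{\bar{u}\theta}$ is a partial isometry if and only if $D_{\bar{u}\theta}$ is. The theorem then gives either $u\mid\theta$ or $\theta^2\mid(u-c)$ for some constant $c$.

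The second alternative cannot occur. Indeed, $\theta^2\mid(u-c)$ means $u=c+\theta^2h$ with $h\in\h$, so
\[
\bar{u}\theta=\bar c\theta+\overline{\theta h},
\]
which lies in $\theta\h+\overline{\theta\h}$. By the Proposition, this forces $A_{\bar{u}\theta}=0$, contradicting the assumption that $A_{\bar{u}\theta}$ is non-zero. (This is exactly the step in the theorem's proof showing that the second alternative corresponds to the zero operator.)

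Hence $u\mid\theta$, so $\bar{u}\theta$ is an inner function and in particular analytic. The only step needing care is ruling out the $\theta^2\mid(u-c)$ case, and that is handled directly by the Proposition.
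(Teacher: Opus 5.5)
Your proposal is correct and follows the paper's route. You read the corollary off the preceding theorem, using the lemma that $A_{\bar{u}\theta}$ is a partial isometry exactly when $D_{\bar{u}\theta}$ is, and you exclude the alternative $\theta^2\mid(u-c)$ because it forces $A_{\bar{u}\theta}=0$, which is the converse step in that theorem's proof. Your separate treatment of constant $u$ and of co-analytic $\bar{u}\theta$ covers the cases that fall outside the theorem's hypotheses.
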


Recall that if $u$ and $v$ are inner functions, then $K_{u}\cap K_{v}=\{0\}$ if and only if $gcd(u,v)=1$ (cf. \cite{GM}). Thus we get the following corollary:
	
\begin{cor}
Let $\theta$ and $u$ be inner functions such that $gcd(u,\theta)=1$. Then $D_{\bar{u}\theta}$ is a partial isometry if and only if $A_{\bar{u}\theta} = 0$.
\end{cor}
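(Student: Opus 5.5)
The statement to prove is the last corollary: if $\gcd(u,\theta)=1$, then $D_{\bar{u}\theta}$ is a partial isometry if and only if $A_{\bar{u}\theta}=0$. I will reduce it to the preceding theorem, handling separately the case in which $\bar{u}\theta$ is co-analytic. Since $|\bar{u}\theta|=1$ a.e., Lemma \ref{Lem:A B and D partial isometry equivalences} shows that $D_{\bar{u}\theta}$ is a partial isometry exactly when $A_{\bar{u}\theta}$ is one.

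\emph{Reverse direction.} If $A_{\bar{u}\theta}=0$, then $A_{\bar{u}\theta}$ is trivially a partial isometry. By Lemma \ref{Lem:A B and D partial isometry equivalences}, so is $D_{\bar{u}\theta}$.

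\emph{Forward direction.} Assume $D_{\bar{u}\theta}$ is a partial isometry and, for contradiction, that $A_{\bar{u}\theta}\neq 0$. Then $A_{\bar{u}\theta}$ is a non-zero partial isometry, and we split into two cases.

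\emph{Case 1: $\bar{u}\theta$ is not co-analytic.} The preceding theorem gives either $u\mid\theta$ or $\theta^2\mid(u-c)$ for some constant $c$. The second alternative forces $A_{\bar{u}\theta}=0$, as shown in the converse part of that proof. In the first alternative $u$ is a common divisor of $u$ and $\theta$, so $\gcd(u,\theta)=1$ forces $u$ to be a unimodular constant. Then $\bar{u}\theta$ is a constant multiple of $\theta$, and $A_{\bar{u}\theta}=\bar{u}A_\theta=0$ because $\theta\in\theta H^2$. Both alternatives contradict $A_{\bar{u}\theta}\neq0$.

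If one prefers not to rely on that theorem's non-constancy hypothesis on $u$, the constant case can be disposed of directly: $A_{c\theta}=0$ for every constant $c$.

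\emph{Case 2: $\bar{u}\theta$ is co-analytic.} Then $\overline{\bar{u}\theta}=u\bar{\theta}\in H^\infty$ has modulus one, so $w:=u\bar{\theta}$ is inner and $u=\theta w$. Hence $\theta\mid u$, and $\gcd(u,\theta)=1$ forces $\theta$ to be constant. This contradicts the standing assumption that $\theta$ is non-constant. Alternatively, the adjoint $A_{\bar{u}\theta}^*=A_w$ would be a partial isometry, and Theorem \ref{Thm:Du partial isometry characterization} applied to $w$ gives the same contradiction once $\theta$ is non-constant.

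The cited fact $K_u\cap K_\theta=\{0\}\iff\gcd(u,\theta)=1$ is not really needed in this route. It would matter in an alternative argument via extremal vectors, where $f\in\mathcal{N}(A_{\bar{u}})\cap K_\theta$ gives $T_u^*f=0$, so $f\in K_u\cap K_\theta=\{0\}$. That argument could replace the appeal to the preceding theorem, but only in the analytic-type subcase. The only delicate point is the bookkeeping about which inner functions may be constant, and this is absorbed by the observation that $A_{c\theta}=0$.
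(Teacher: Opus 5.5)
Your proof is correct, but it takes a heavier route than the paper.

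\textbf{Your route.} You use the preceding theorem as a black box. That theorem's proof depends on Sedlock's product lemma (Lemma \ref{prod}), the norm-preserving analytic symbol for type-$0$ operators (Lemma \ref{typealphachar}), and Brown--Douglas via Theorem \ref{Thm:Du partial isometry characterization}. You then need a case split on whether $\bar u\theta$ is co-analytic and on whether $u$ is constant.

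\textbf{The paper's route.} The paper gives no written proof. It signals its argument by recalling, just before the statement, that $K_u\cap K_\theta=\{0\}$ if and only if $\gcd(u,\theta)=1$, and it uses only the first step of the preceding theorem's proof. Suppose $A_{\bar u\theta}$ were a non-zero partial isometry, and pick $0\neq f\in\cln(A_{\bar u\theta})^\perp$.
\begin{itemize}
\item Since $|\bar u\theta|=1$, the equality $\|P_\theta(\bar u\theta f)\|=\|f\|=\|\bar u\theta f\|$ forces $\bar u\theta f\in K_\theta$.
\item Hence $\bar u\theta f\perp\theta\h$, so $\bar u f\perp \h$, that is, $T_u^*f=0$.
\item Therefore $f\in K_u\cap K_\theta=\{0\}$, a contradiction.
\end{itemize}
The converse direction is the same as yours.

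\textbf{Your closing remark is mistaken.} You say the $K_u\cap K_\theta$ argument works ``only in the analytic-type subcase.'' In fact it needs neither the hypothesis that $\bar u\theta$ is not co-analytic nor any bookkeeping about which functions are constant. It handles every case at once, which is why the paper can state the corollary as an immediate consequence.

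\textbf{Comparison.} Your approach gains only the convenience of quoting a stated theorem. It costs dependence on the Sedlock-class and commutant-lifting machinery behind that theorem, where a two-line kernel argument suffices.
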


Next we will characterize partially isometric DTTO with symbol $\bar{u}v$, where $v$ divides $\theta$. Note that while considering the symbols of the form $\bar{u}v$, where $u$ and $v$ are arbitrary inner functions, we can assume that $gcd(u,v)=1$ until it does not hamper any given condition.
	
\begin{lem}\label{lemma for zero case}
Let $u, v$ and $\theta$ be inner functions with $gcd(u,v)=1$ such that $v$ divides 
$\theta$. If $A_{\bar{u}v} = 0$, then either $\theta=cv$ where $c$ is a unimodular constant or $v$ is a unimodular constant.
\end{lem}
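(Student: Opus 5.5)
The plan is to pass to the adjoint and test it against a well-chosen subspace of $K_\theta$. Since $v$ divides $\theta$, I write $\theta=vw$ with $w$ inner. If $w$ is a unimodular constant $c'$, then $\theta=c'v$ and the first alternative holds, so I assume from now on that $w$ is non-constant. The goal is then to show that $v$ must be a unimodular constant.

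First I check that $vK_w\subseteq K_\theta$. For $f\in K_w$ we have $vf\in H^2$. For any $h\in H^2$ we also have $\langle vf,\theta h\rangle=\langle vf, vwh\rangle=\langle f, wh\rangle=0$, since multiplication by the inner function $v$ is an isometry on $L^2(\T)$. Next, $A_{\bar u v}=0$ gives $A_{u\bar v}=A_{\bar u v}^*=0$. Applying $A_{u\bar v}$ to $vf$ with $f\in K_w$, and using $|v|=1$ a.e., I get
\[
0=A_{u\bar v}(vf)=P_\theta(uf), \qquad f\in K_w .
\]
Since $uf\in H^2$ and $H^2=K_\theta\oplus\theta H^2$, this says $uf\in\theta H^2=vwH^2$ for every $f\in K_w$.

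Now I pick a specific outer element of $K_w$, namely the reproducing kernel at the origin, $f_0=1-\overline{w(0)}\,w$. It lies in $K_w$ and is nonzero because $w$ is non-constant. It is outer because $|w(0)|<1$ gives $\operatorname{Re} f_0\geq 1-|w(0)|>0$ on $\D$, and an $H^\infty$ function with positive real part is outer. Hence the inner factor of $uf_0$ is exactly $u$. Since $vw$ divides $uf_0$ (i.e.\ $uf_0/(vw)\in H^2$), the inner function $vw$ divides this inner factor, so $vw$ divides $u$. In particular $v$ divides $u$. Combined with $\gcd(u,v)=1$, this forces $v$ to be a unimodular constant.

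I expect the main obstacle to be the divisibility step: passing from ``$uK_w\subseteq vwH^2$'' to ``$vw\mid u$''. This needs one element of $K_w$ carrying no inner factor, and the kernel $1-\overline{w(0)}w$ supplies it. If one prefers to avoid the positive-real-part argument, Lemma~\ref{Lem:inner outer factorization} gives an alternative: with $f_0$ outer and $g=uf_0/(vw)\in H^2$, we have $g/f_0=u/(vw)\in L^2(\T)$, so the lemma yields $\bar v\bar w u\in H^2$. This directly says $vw$ divides $u$.
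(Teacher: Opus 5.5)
Your proof is correct, but it takes a different route from the paper. The paper starts from Sarason's description of zero truncated Toeplitz operators, item (2) of the Proposition in Section~2: $A_{\bar u v}=0$ if and only if $\bar u v=\theta h_1+\overline{\theta h_2}$ with $h_1,h_2\in H^2$. It multiplies by $\bar v$ to get $\bar u=\bar v\theta h_1+\overline{v\theta h_2}$. Then $\bar v\theta h_1$ lies in $H^2\cap\overline{H^2}$ and is therefore a constant. A nonzero constant forces $\theta=cv$; the zero constant gives $u=v\theta h_2$, so $v\mid u$ and $v$ is constant.

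You never use the symbol characterization. You work from the definition of $A_\phi$ alone: the inclusion $vK_w\subseteq K_\theta$ (that is, $K_\theta=K_v\oplus vK_w$), the adjoint $A_{u\bar v}$ to cancel $v$, and the outer vector $f_0=1-\overline{w(0)}\,w\in K_w$, which turns $uf_0\in\theta H^2$ into $\theta\mid u$.

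The paper's version is shorter once Sarason's theorem is granted, and it only compares analytic and co-analytic parts. However, the ``only if'' direction of that theorem is a genuine result, not a routine check. Your argument is more elementary and self-contained. It also proves more: you only use that $A_{\bar u v}^*$ annihilates the single vector $vf_0$, so the conclusion holds under that much weaker hypothesis.

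Splitting at the outset on whether $w=\bar v\theta$ is constant also makes explicit a step the paper compresses into ``hence $h_1$ is constant''. There, a nonzero constant $\bar v\theta h_1=c$ gives $h_1=c\,v\bar\theta\in H^2$, which forces $\theta/v$ to be constant.
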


\begin{proof}
Note that $A_{\bar{u}v} = 0$ if and only if there exists $h_1, h_2 \in H^2$ 
such that $\bar{u}v =\theta h_1 +\overline{\theta h_2}$. Thus
\begin{align}\label{div}
\bar{u}=\bar{v} \theta h_1 + \overline{ v\theta h_2},
\end{align}
which implies that $\bar{v}\theta h_1 \in \overline{H^2}$. But since $\bar{v}\theta$ is inner, $\bar{v}\theta h_1$ must be constant, and hence $h_1$ is constant. Now if $h_1$ 
is non-zero, then $\theta=cv$, where $c$ is a unimodular constant. If $h_1=0$, then the equation $(\ref{div})$ implies that $u=v \theta h_2$, i.e., $v$ divides $u$. Since $gcd (u,v)=1$, $v$ must be a unimodular constant. 
\end{proof}

\begin{thm}\label{vdividestheta}
Let $\theta$ be a non-constant inner function and $\phi=\Bar{u}v$ not co-analytic, where $u$ and $v$ are inner functions such that $gcd(u,v)=1$. Suppose $v$ is non-constant and divides $\theta$ non-trivially, then the following are equivalent:
\begin{enumerate}
			\item $D_{\phi}$ is a partial isometry.
			\item $u$ is a unimodular constant.			
\end{enumerate}  
\end{thm}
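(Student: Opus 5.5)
The direction (2)$\Rightarrow$(1) is immediate. If $u$ is a unimodular constant, then $\phi$ is a unimodular multiple of the inner function $v$, and $v$ divides $\theta$. Theorem \ref{Thm:Du partial isometry characterization} then gives that $D_v$ is a partial isometry, and so is any unimodular multiple of it.

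For (1)$\Rightarrow$(2), I will reduce to truncated Toeplitz operators via Lemma \ref{Lem:A B and D partial isometry equivalences}, which applies since $|\phi|=1$ a.e. So $A_{\bar u v}$ is a partial isometry on $K_\theta$.

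\emph{Step 1: $A_{\bar u v}\neq 0$.} If $A_{\bar uv}=0$, Lemma \ref{lemma for zero case} forces either $\theta=cv$ or $v$ constant. Both are excluded: $v$ is non-constant, and it divides $\theta$ non-trivially.

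\emph{Step 2: $A_{\bar u v}A_u=0$.} I will copy the argument from the proof of the $\bar u\theta$ theorem. Pick $0\neq f\in\cln(A_{\bar uv})^\perp$. Then $\|A_{\bar uv}f\|=\|f\|$, and the norm chain in Lemma \ref{Characterisation} shows $\bar u v f\in K_\theta$. Hence
\[
0=T_\theta^*(\bar u v f)=T_{\bar u}T_{\bar v\theta}^{*}\ldots
\]
I would rather use the following cleaner route. From $\bar uvf\in K_\theta\subseteq H^2$ we get $T_u^*(vf)=\bar u v f$. I want to conclude $P_\theta(\bar u f)=0$, i.e.\ $A_{\bar u}f=0$. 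This is the step I expect to be the main obstacle, because the factor $v$ does not cancel as cleanly as $\theta$ did.

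My first attempt is to use $\gcd(u,v)=1$. Since $vf\in u H^2$ and $\gcd(u,v)=1$, we want $f\in uH^2$, so that $\bar u f\in H^2$ and $\bar u v f\in H^2$. One must be careful, since $f$ need not be divisible by $u$ merely from $vf\in uH^2$ in full generality. Passing to inner factors, $u$ divides $v f^i$, and coprimality gives $u\mid f^i$. Then $\bar u f\in H^2$, so $A_{\bar u}f=P_\theta(\bar uf)$. Nothing yet forces this to vanish, so I will instead aim for the statement $\cln(A_{\bar uv})^\perp\subseteq uH^2\cap K_\theta$. Hence $\clr(A_{\bar u})=\overline{\clr}(A_u^*)\ldots$; equivalently $K_\theta\ominus \cln(A_{\bar uv})\perp K_u\cap K_\theta$. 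Since $K_u\cap K_\theta=K_{\gcd(u,\theta)}$, this gives $A_{\bar uv}$ vanishing on $K_{\gcd(u,\theta)}$.

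\emph{Step 3: Sedlock-type dichotomy.} Alternatively, to obtain $A_{\bar uv}A_u=0$ as in the previous theorem, I will apply Lemma \ref{prod}, with $A_u$ of type $0$. The possibilities are as follows. If $A_u=0$, then $\theta\mid u$, which contradicts either the non-co-analyticity of $\phi$ or the assumption $\gcd(u,v)=1$ together with $v\mid\theta$ non-constant. If $A_u=\alpha I$ with $\alpha\ne0$, then $A_{\bar uv}=0$, which is excluded by Step 1. Otherwise $A_{\bar uv}\in\mathscr B^\theta_0$, so $A_{\bar uv}=A_\psi$ with $\psi\in H^\infty$ and $\|\psi\|_\infty=1$ by Lemma \ref{typealphachar}. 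By Lemma \ref{Characterisation} $\psi$ is inner, and by Theorem \ref{Thm:Du partial isometry characterization} $\psi\mid\theta$.

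Writing $\bar uv=\psi+\theta h_1+\overline{\theta h_2}$ and multiplying by $u$ gives $v-u\psi-u\theta h_1\in\overline{zH^2}\cdot u$. I will compare analytic parts as in \eqref{eq1}. Since $v\mid\theta$, we get $\bar u = \bar v\psi+\bar v\theta h_1+\overline{v\theta h_2}$. Here $\bar v\theta h_1$ is analytic, and the analysis proceeds as in Lemma \ref{lemma for zero case}. The goal is to show $\bar v\psi$ is analytic and to obtain $u$ constant from $\gcd(u,v)=1$. Whether $\bar v \psi$ or its conjugate carries the obstruction is the delicate case analysis I expect to be the hardest step.
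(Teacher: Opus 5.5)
Your direction (2)$\Rightarrow$(1) and Step 1 are fine. The core of (1)$\Rightarrow$(2), however, has a genuine gap. The identity $A_{\bar uv}A_u=0$ that Steps 2--3 aim for is not merely hard to prove; it is false in every case the theorem covers. The theorem says $u$ must be a unimodular constant $c$, and then $A_{\bar uv}A_u=A_{\bar c v}A_c=A_v\neq 0$, because $v$ is a non-trivial divisor of $\theta$ and so $\theta\nmid v$. For instance, $\theta=z^2$, $v=z$, $u=1$ gives the compressed shift. Consequently your Step 3 case analysis, which dismisses $A_u=\alpha I$ on the grounds that it forces $A_{\bar uv}=0$, would discard exactly the situation that actually occurs. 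Your fallback observation $\cln(A_{\bar uv})^\perp\subseteq uH^2\cap K_\theta$ is correct, but it gives no product identity to which Lemma \ref{prod} applies. The last step, deducing that $u$ is constant from $A_{\bar uv}=A_\psi$, is left open.

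The missing idea is to let the divisibility $v\mid\theta$ do the cancelling that $\theta$ did in the $\bar u\theta$ theorem.

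\emph{The right product identity.} From $\bar uvf\in K_\theta$ you get $0=T_\theta^*(\bar uvf)=P(\bar uv\bar\theta f)$, so $A_{\bar uv\bar\theta}f=0$ for all $f\in\cln(A_{\bar uv})^\perp$. Hence $A_{\bar uv}A_{u\bar v\theta}=0$. Since $v\mid\theta$, the symbol $u\bar v\theta$ is inner, so $A_{u\bar v\theta}$ is of type $0$ and Lemma \ref{prod} applies.

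\emph{The Sedlock dichotomy.} If $A_{u\bar v\theta}=cI$ with $c\neq0$, then $A_\phi=0$, which Step 1 excludes. If $A_{u\bar v\theta}=0$, then $\theta\mid u\bar v\theta$, i.e.\ $v\mid u$, which is impossible by coprimality since $v$ is non-constant. So $A_\phi=A_\psi$ with $\psi$ inner and $\psi\mid\theta$.

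\emph{Finishing.} Conjugating $\bar uv=\psi+\theta h_1+\overline{\theta h_2}$ and multiplying by $\theta$ gives $u\bar v\theta=\bar\psi\theta+\overline{h_1}+\theta^2h_2$, so $h_1$ is a constant $\beta$. Then $0=A_\psi A_{u\bar v\theta}=A_{\theta+\bar\beta\psi+\theta^2\psi h_2}=\bar\beta A_\psi$ forces $\beta=0$. Thus $A_{u\bar v\theta}=A_{\bar\psi\theta}$ is a non-zero partial isometry. Theorem \ref{Thm:Du partial isometry characterization}, applied to the inner symbol $u\bar v\theta$, then gives $u\bar v\theta\mid\theta$, i.e.\ $u\mid v$, and coprimality makes $u$ a unimodular constant.
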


\begin{proof} $(1) \Rightarrow (2):$
Assume that $D_{\phi}$ is a partial isometry, where $\phi=\Bar{u}v$ and also $v$ divides $\theta$. Consider the corresponding truncated Toeplitz operator $A_{\phi}$ on the model space $K_{\theta}$. Then $A_{\phi}$ is also a partial isometry by Lemma \ref{Lem:A B and D partial isometry equivalences}. If $A_{\bar{u}v}= 0$, then by Lemma \ref{lemma for zero case} and given assumption, $v$ is a unimodular constant. In that case $\phi=\bar{u}$ which is co-analytic. 

On the other hand, if $A_{\bar{u}v} \neq 0$, then there exists $ 0 \neq f\in \cln(A_{\Bar{u}v})^{\perp}$ such that $\| A_{\bar{u}v} f\| = \|f \|$.
This yields $\Bar{u}vf\in K_{\theta}$. Therefore,
\[
P(\Bar{u}v\Bar{\theta}f) = T_{\theta}^*(\Bar{u}vf)= 0.
\] 
Thus
\[
A_{\Bar{u}v\Bar{\theta}}f = P_{\theta}(\Bar{u}v\Bar{\theta}f)= P_{\theta}P(\Bar{u}v\Bar{\theta}f) =0,
\]
and hence $\cln(A_{\Bar{u}v})^{\perp} \subseteq \cln(A_{\Bar{u}v\Bar{\theta}})$.  Consequently,
\[
\clr(A_{u\Bar{v}\theta})\subseteq \overline{\clr(A_{u\Bar{v}\theta})}=\cln(A_{\Bar{u}v\Bar{\theta}})^{\perp}\subseteq \cln(A_{\Bar{u}v}),
\]
which implies 
\[
A_{\phi}A_{u\Bar{v}\theta} = A_{\Bar{u}v}A_{u\Bar{v}\theta}=0.
\]
Now $A_{u\bar{v}\theta}$ is of type $0$. Then, by Lemma \ref{prod}, either $A_{u\bar{v}\theta} = cI$ for some constant $c$ or $A_\phi$ is of type $0$. If $A_{u\bar{v}\theta}=cI$, clearly $A_\phi = 0$ which is not the case.  

Also, if $A_{u\bar{v}\theta} = 0$, then it is easy to check that $v$ divides $u$ but since $gcd(u,v)$ is constant, $v$ must be constant. Thus $A_\phi$ is of type $0$. It follows that there exists $\psi \in H^\infty$ such that $A_\phi=A_\psi$ and $\|\psi\| =\|A_\phi\|=1$.  Again $A_\phi$ is a non-zero partial isometry, so is $A_\psi$. Then $\psi | \theta$. Consequently, there exist $h_1, h_2 \in \h$ such that
\[
\bar{u}v=\psi+\theta h_1 +\overline{\theta h_2}.
\] 
Therefore,		
\[
u\bar{v}\theta=\bar{\psi}{\theta} +\overline{h_1} + \theta^2 h_2,
\]
which implies $h_1=0$. Then
\[
A_{u\bar{v}\theta}= A_{\bar{\psi}{\theta}}.
\]
Note that $\theta=(\bar{\psi}{\theta}) \psi$. Thus, $A_{\bar{\psi}{\theta}}$ is a partial isometry and so is $A_{u\bar{v}\theta}$. Hence, by Lemma \ref{Lem:A B and D partial isometry equivalences} and Theorem \ref{Thm:Du partial isometry characterization}, $u\bar{v}\theta |\theta$, i.e., $\theta=u\bar{v}\theta w$ for some inner function $w$. 
It follows that $v =uw$ and hence $u|v$. Now $gcd(u,v)=1$ infers that u is a unimodular constant.

The converse part follows trivially from Theorem \ref{Thm:Du partial isometry characterization}.
\end{proof}

\section{Extremal vectors}\label{sec4}
	
In this section, we will examine the structure of initial spaces of partially isometric TTOs and DTTOs. We will be needed the following definition for truncated Hankel operator given in \cite{PM}.
	
\begin{lem}
For $\phi \in L^\infty(\T)$ and $f \in K_\theta$,
\[
B_\phi f=H_\phi H_{\bar{\theta}}^*H_{\bar{\theta}}f+M_\theta H_{\bar{\phi}}^*H_{\bar{\theta}}f.
\]
\end{lem}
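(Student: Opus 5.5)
We prove the identity
\[
B_\phi f=H_\phi H_{\bar{\theta}}^*H_{\bar{\theta}}f+M_\theta H_{\bar{\phi}}^*H_{\bar{\theta}}f \qquad (f\in K_\theta)
\]
by a direct computation, using the decomposition $Q_\theta=(I-P)+M_\theta PM_{\bar\theta}$ recorded in Section~\ref{sec2}. We split $B_\phi f=Q_\theta(\phi f)$ into its component in ${H^2}^\perp$ and its component in $\theta H^2$, and match each with one of the two terms on the right.

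\emph{The ${H^2}^\perp$-component.} We have $(I-P)(\phi f)=H_\phi f$. To identify this with $H_\phi H_{\bar\theta}^*H_{\bar\theta}f$, it suffices to show $H_{\bar\theta}^*H_{\bar\theta}f=f$ for $f\in K_\theta$. This is the standard fact that $H_{\bar\theta}^*H_{\bar\theta}=I-T_\theta T_{\bar\theta}=I-M_\theta PM_{\bar\theta}|_{H^2}$ is the orthogonal projection of $H^2$ onto $K_\theta$. It follows from $T_{\bar\theta\theta}-T_{\bar\theta}^{*}{}^{*}\ldots$, or more simply from the identity $T_{\bar\psi\chi}-T_{\bar\psi}T_{\chi}=H_{\psi}^*H_{\chi}$ with $\psi=\chi=\theta$. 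Alternatively, one can check it directly: for $f\in K_\theta$ we have $\bar\theta f\in {H^2}^\perp$, so $H_{\bar\theta}f=\bar\theta f$; then $H_{\bar\theta}^*g=P(\theta g)$ for $g\in{H^2}^\perp$, giving $H_{\bar\theta}^*H_{\bar\theta}f=P(f)=f$. Hence the first term equals $(I-P)(\phi f)$.

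\emph{The $\theta H^2$-component.} We have $M_\theta PM_{\bar\theta}(\phi f)=\theta P(\bar\theta\phi f)$. Using $H_{\bar\theta}f=\bar\theta f$ as above, the second term becomes $\theta H_{\bar\phi}^*(\bar\theta f)$. Since $H_{\bar\phi}^*:{H^2}^\perp\to H^2$ acts by $g\mapsto P(\phi g)$ (the adjoint of $h\mapsto(I-P)(\bar\phi h)$), we obtain $\theta P(\phi\bar\theta f)$, which is exactly the desired component. Adding the two components gives the identity.

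The only delicate point is bookkeeping: the adjoint formula $H_\psi^*g=P(\bar\psi g)$ for $g\in{H^2}^\perp$, and the fact that $\bar\theta f\in{H^2}^\perp$ precisely because $f\perp\theta H^2$. Both are routine checks with inner products; I expect no real obstacle beyond stating them carefully.
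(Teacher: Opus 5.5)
Your proof is correct and complete. The paper gives no proof of its own here; it imports the formula from \cite{PM}. Your decomposition $Q_\theta=(I-P)+M_\theta PM_{\bar\theta}$, together with $H_{\bar\theta}f=\bar\theta f$ for $f\in K_\theta$ and the adjoint formulas $H_{\bar\theta}^*g=P(\theta g)$ and $H_{\bar\phi}^*g=P(\phi g)$ for $g\in{H^2}^\perp$, is the natural verification. It also explains the rewritings $H_{\bar\theta}^*H_{\bar\theta}=I-T_\theta T_\theta^*$ and $H_{\bar\phi}^*H_{\bar\theta}=T_{\phi\bar\theta}-T_\phi T_\theta^*$ that the paper uses later in Section~\ref{sec4}.

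One slip to clean up is the aside before ``Alternatively''. Its first expression is garbled, and the parameter choice is wrong. With $\psi=\chi=\theta$, your identity $T_{\bar\psi\chi}-T_{\bar\psi}T_\chi=H_\psi^*H_\chi$ reads $I-T_{\bar\theta}T_\theta=H_\theta^*H_\theta$, which is just $0=0$ because $H_\theta=0$. The choice you want is $\psi=\chi=\bar\theta$, which gives $I-T_\theta T_{\bar\theta}=H_{\bar\theta}^*H_{\bar\theta}$. Your direct check already shows $H_{\bar\theta}^*H_{\bar\theta}f=f$ on $K_\theta$, so this does not affect the argument; simply delete or correct that sentence.
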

	
We will start with the initial space of $A_{\bar{u}v}$ on $K_\theta$, where $u,v$ are inner functions.

\begin{prop}
Let $u$ and $v$ be non-constant inner functions such that $A_{\bar{u}v}$ is a non-zero partial isometry on $K_\theta$. Then
\[
\cln(A_{\bar{u}v})^\perp=\{f \in K_\theta \cap u\, \overline{gcd(u,v)}\h: vf \in K_{u\theta}\}.
\]
\end{prop}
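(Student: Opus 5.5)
The plan is to first reduce the initial space to an explicit membership condition, and then unwind that condition with inner–outer arguments.

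\textbf{Step 1: the initial space as a norm condition.} Since $A_{\bar{u}v}$ is a partial isometry, Lemma \ref{Lem:partial isometry equivalences} shows that $E=A_{\bar{u}v}^*A_{\bar{u}v}$ is the orthogonal projection onto $\cln(A_{\bar{u}v})^\perp$. For $f\in K_\theta$ we have
\[
\|f\|^2-\|A_{\bar{u}v}f\|^2=\langle (I-E)f,f\rangle=\|(I-E)f\|^2 .
\]
Hence $f\in \cln(A_{\bar{u}v})^\perp$ if and only if $\|A_{\bar{u}v}f\|=\|f\|$.

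\textbf{Step 2: from the norm condition to membership in $K_\theta$.} Since $|\bar{u}v|=1$ a.e., we have $\|\bar{u}vf\|=\|f\|$. As in the proof of Lemma \ref{Characterisation}, $\|P_\theta(\bar{u}vf)\|=\|\bar{u}vf\|$ holds exactly when $\bar{u}vf\in K_\theta$. Combining with Step 1,
\[
\cln(A_{\bar{u}v})^\perp=\{f\in K_\theta:\ \bar{u}vf\in K_\theta\}.
\]
It remains to show that, for $f\in K_\theta$, the condition $\bar{u}vf\in K_\theta$ is equivalent to the two conditions $f\in u\,\overline{d}H^2$ and $vf\in K_{u\theta}$, where $d=\gcd(u,v)$.

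\textbf{Step 3: analyticity of $\bar{u}vf$.} Write $u=du_1$ and $v=dv_1$ with $\gcd(u_1,v_1)=1$, so that $\bar{u}v=\bar{u}_1v_1$ and $u_1=u\overline{d}$.

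If $g:=\bar{u}_1v_1f\in H^2$, then $v_1f=u_1g$. Take inner–outer factorizations $f=f^if^o$ and $g=g^ig^o$. Uniqueness of the factorization gives $v_1f^i=u_1g^i$ up to a unimodular constant, so $u_1$ divides $v_1f^i$. Since $\gcd(u_1,v_1)=1$, $u_1$ divides $f^i$, and therefore $f\in u_1H^2$.

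Conversely, if $f\in u_1H^2$, then clearly $\bar{u}_1v_1f\in H^2$. Thus
\[
\bar{u}vf\in H^2 \iff f\in u\,\overline{\gcd(u,v)}\,H^2 .
\]
I expect this coprimality step to be the main point needing care. The needed fact is that inner functions with $\gcd=1$ satisfy $u_1\mid v_1w\Rightarrow u_1\mid w$, which holds via the lattice of inner divisors.

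\textbf{Step 4: the $K_\theta$ condition.} Assume now that $g=\bar{u}vf\in H^2$. Then $g\in K_\theta$ if and only if $g\perp\theta H^2$, that is, $\bar{\theta}g\in\overline{zH^2}$. This reads $\overline{u\theta}\,vf\in\overline{zH^2}$, which is precisely $vf\perp u\theta H^2$. Since $vf\in H^2$, this is equivalent to $vf\in K_{u\theta}$.

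\textbf{Conclusion.} Steps 3 and 4 together show that, for $f\in K_\theta$, $\bar{u}vf\in K_\theta$ holds exactly when $f\in u\,\overline{\gcd(u,v)}H^2$ and $vf\in K_{u\theta}$. By Step 2 this yields the stated description of $\cln(A_{\bar{u}v})^\perp$, and both inclusions follow from these equivalences.
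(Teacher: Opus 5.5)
Your proof is correct and is essentially the paper's argument. The paper identifies $\cln(A_{\bar{u}v})^\perp$ with $\cln(B_{\bar{u}v})$ and uses the Hankel-operator formula for $B_\phi$ to obtain the two conditions $H_{\bar{u}v}f=0$ and $T_{\bar{u}v\bar{\theta}}f=0$; you reach the same two conditions (your Steps 3 and 4) directly from norm-attainment and Pythagoras, which carries the same content since $\|f\|^2-\|A_{\bar{u}v}f\|^2=\|B_{\bar{u}v}f\|^2$. Your inner--outer/coprimality argument is the same divisibility step the paper uses, only spelled out more carefully, and it gives the correct factor $u\overline{\gcd(u,v)}$ where the paper's proof writes $uwH^2$.
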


\begin{proof}
Note that for $\|\phi\|_{\infty} \leq 1$, if $A_\phi$ is a partial isometry, then 
$\cln(A_\phi)^\perp=\cln(B_\phi)$. So, we will find the structure of $\cln(B_{\bar{u}v})$.

Let $f \in \cln(B_{\bar{u}v})$. Then
\[
H_{\bar{u}v}H_{\bar{\theta}}^*H_{\bar{\theta}}f+ M_\theta H_{u\bar{v}}^*H_{\bar{\theta}}f = B_{\bar{u}v}f=0.
\]
It follows that
\begin{align*}
 & H_{\bar{u}v}H_{\bar{\theta}}^*H_{\bar{\theta}}f+M_\theta H_{u\bar{v}}^*H_{\bar{\theta}}f  =0\\
			\Leftrightarrow \,\,	& H_{\bar{u}v}H_{\bar{\theta}}^*H_{\bar{\theta}}f=0 \text{ and } M_\theta H_{u\bar{v}}^*H_{\bar{\theta}}f =0\\
			\Leftrightarrow \,\,& H_{\bar{u}v} (I-T_\theta T_\theta^*) f \text{ and } (T_{\bar{u}v\bar{\theta}}- T_{\bar{u}v}T_\theta^*)f=0\\
			\Leftrightarrow \,\,& H_{\bar{u}v}f=0 \text{ and } T_{\bar{u}v\bar{\theta}}f=0,
\end{align*}
where the last implication holds as $f \in K_\theta$. Let $gcd(u,v)=w$ with $u=ww_1$ and $v=ww_2$. Then $\bar{u}v=\bar{w_1}w_2$ and $gcd(w_1,w_2)=1$. Now $H_{\bar{u}v}f=0$ implies $H_{\bar{w_1}w_2}f=0$. Thus 
\[
f \in w_1\h=uw\h.
\]
This completes the proof.
\end{proof}

\begin{cor}
Let $u$ and $v$ be non-constant inner functions with $gcd(u,v)=1$ such that $A_{\bar{u}v}$ is a non-zero partial isometry on $K_\theta$. Then
\begin{align}\label{gcd1}
\cln(A_{\bar{u}v})^\perp=\{f \in K_\theta \cap u \h: vf \in K_{u\theta}\}.
\end{align}
\end{cor}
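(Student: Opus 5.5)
The corollary should follow by specializing the preceding proposition to the case $gcd(u,v)=1$. By the proposition, for non-constant inner $u,v$ with $A_{\bar{u}v}$ a non-zero partial isometry,
\[
\cln(A_{\bar{u}v})^\perp=\{f \in K_\theta \cap u\, \overline{gcd(u,v)}\h: vf \in K_{u\theta}\}.
\]
When $gcd(u,v)=1$ (up to a unimodular constant, which we normalize to $1$), the factor $u\,\overline{gcd(u,v)}$ is just $u$. The set then becomes $\{f\in K_\theta\cap u\h : vf\in K_{u\theta}\}$, which is exactly (\ref{gcd1}).

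To make the argument robust to how the proposition's proof was phrased, I would also re-derive the identification directly from the two conditions obtained there. Since $|\bar u v|=1$, Lemma \ref{Lem:A B and D partial isometry equivalences} and $I-A_\phi^*A_\phi=B_\phi^*B_\phi$ give $\cln(A_{\bar uv})^\perp=\cln(B_{\bar uv})$. The computation in the proof of the proposition then reduces $f\in\cln(B_{\bar uv})$ with $f\in K_\theta$ to two conditions: $H_{\bar uv}f=0$ and $T_{\bar uv\bar\theta}f=0$.

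\textbf{First condition.} With $gcd(u,v)=1$, the condition $H_{\bar uv}f=0$ says $\bar u v f\in\h$. Write $f=f^if^o$. Then $\bar u v f^i$ must be inner, so $u$ divides $vf^i$. Since $u$ and $v$ are coprime, $u$ divides $f^i$, i.e. $f\in u\h$. Conversely, $f\in u\h$ clearly gives $\bar uvf\in\h$.

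\textbf{Second condition.} Given $g=\bar uvf\in\h$, the condition $T_{\bar uv\bar\theta}f=0$ says $P(\bar\theta g)=0$, i.e. $g\in K_\theta$. Equivalently $vf=ug\in uK_\theta$. Since $K_{u\theta}=K_u\oplus uK_\theta$, and $vf\in u\h$ is orthogonal to $K_u$, this is the same as $vf\in K_{u\theta}$.

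Combining the two conditions yields (\ref{gcd1}). The only delicate point I anticipate is the coprimality step $u\mid vf^i\Rightarrow u\mid f^i$. For singular inner factors this should be handled via the gcd/lcm lattice of inner functions rather than zeros alone.
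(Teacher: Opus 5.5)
Your proposal is correct and follows the paper's approach. The corollary is just the proposition with $gcd(u,v)=1$, so that $u\,\overline{gcd(u,v)}\h=u\h$, and your re-derivation via $\cln(A_{\bar uv})^\perp=\cln(B_{\bar uv})$ and the two conditions $H_{\bar uv}f=0$, $T_{\bar uv\bar\theta}f=0$ is the route of the proposition's proof; your check that the second condition is $vf\in K_{u\theta}$ (since $T_{\bar uv\bar\theta}f=T_{\overline{u\theta}}(vf)$) makes explicit a step the paper leaves implicit.
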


\begin{rem}
For $T\in \clb(\clh)$, define 
\[
\cle_{T}=\{h \in \clh: \|Th\|=\|T\|\|h\|\},
\]
that is, $\cle_{T}$ is the set of all such points on which $T$ attains its norm, called the set of extremal vectors. From the above proof, for a non-zero partially isometric 
TTO $A_{\bar{u}v}$ on $K_\theta$, 
\[
\cle_{A_{\bar{u}v}}=\{f \in K_\theta \cap u \overline{gcd(u,v)}\h: vf \in K_{u\theta}\}.
\]
\end{rem}

Next we will discuss about the initial space of $D_{\bar{u}v}$ whenever $A_{\bar{u}v}$ is a non-zero partial isometry. Without loss of generality, we can assume that $gcd(u,v)=1$.
Recall that
\begin{align*}
		\cln(D_{\bar{u}v})^\perp&=\{f \in K_\theta^\perp: \|D_{\bar{u}v}f\|=\|f\|\}\\
		&=\{f \in K_\theta^\perp: \|Q_{\theta}(\bar{u}v f)\|= \|f \| \}\\
		&=\{f \in K_\theta^\perp: \bar{u}v f \in K_{\theta}^\perp\}.
\end{align*}
Since $K_{\theta}^{\perp}= \theta \h \oplus \h{}^{\perp}$, we will define the following sets:
\[
\cle_{+}=\{ \theta h \in \theta \h : \|D_{\bar{u}v}(\theta h)\|=\|\theta h\|\},
\]
and
\[
\cle_{-}=\{\overline{zg} \in \h{}^\perp : \|D_{\bar{u}v}(\overline{zg})\|=\|\overline{zg}\|\}.
\]
It is easy to verify that $\theta u\h \subset \cle_+$ and $\overline{zv\h} \subset \cle_-$. Thus
	\[
	\theta u\h \oplus \overline{zv\h} \subseteq \cle_{D_{\bar{u}v}}.
	\] 

We claim that if $A_{\bar{u}v}$ is a non-zero partial isometry (so is $D_{\bar{u}v}$), then $\cle_{+}$ and $\cle_{-}$ are subspaces and also
\[
\cle_{+}=\cle_{D_{\bar{u}v}} \cap \theta \h =\theta u\h \quad \text{ and } \quad 
\cle_{-}=\cle_{D_{\bar{u}v}} \cap \h{}^\perp=\overline{zv\h}.
\]

First observe that if $A_{\bar{u}v}$ is non-zero partial isometry, then (\ref{gcd1}) implies that $K_\theta \cap u\h \neq \{0\}$ and hence $\cln(T_{\bar{\theta}u}) \neq \{0\}$ (see \cite{XY}). By Coburn's alternative, $\cln(T_{\bar{u}\theta})=\{0\}$, that is, $K_u \cap \theta \h=\{0\}$. Now assume that $\theta h \in \cle_{D_{\bar{u}v}} \cap \theta \h =\cle_+$. Then $\|D_{\bar{u}v}(\theta h)\|=\|\theta h\|$ which implies $\bar{u}v \theta h \in K_\theta^\perp$. Thus there exist $h_1, h_2 \in \h$ such that
\[
\bar{u}v\theta h=\theta h_1 +\overline{zh_2}.
\]
Therefore,
\begin{align}\label{uH^2}
v\theta h =u\theta h_1 +u\overline{zh_2}.
\end{align}
Since $v\theta h, u\theta h_1 \in \h$, so does $u\overline{zh_2}$. Hence, $u\overline{zh_2} \in K_u \cap \theta \h =\{0\}$. Consequently, $h_2=0$ and from (\ref{uH^2}), 
\[
\bar{u}v h=h_1 \in \h.
\]
Let $h=h^{i}h^{o}$ is the inner-outer factorization of $h \in \h$. Thus, 
$\bar{u}vh^{i}h^{o} \in \h$, and hence, by Lemma \ref{Lem:inner outer factorization}, $\bar{u}vh^{i}$ is inner, that is, $u$ divides $vh^{i}$. Now $gcd(u,v)=1$ yields $u$ divides $h^{i}$. Thus we get $h \in u\h$. It proves the first claim. 

For the second one, assume that $\overline{zg}\in \cle_{D_{\bar{u}v}} \cap \h{}^\perp$, then $\|D_{\bar{u}v} {(\overline{zg})}\|=\|\overline{zg}\|$. Since $D_{\bar{u}v}$ is $C$-symmetric, we have
\begin{align*}
		&\|CD_{\bar{u}v} {(\overline{zg})}\|=\|C(\overline{zg})\|\\
		\implies & \|D_{\bar{v}u} C(\overline{zg})\|=\|C(\overline{zg})\|\\
		\implies & \|D_{\bar{v}u} {(\theta g)}\|=\|\theta g\|.	
\end{align*} 
Since $D_{\bar{u}v}$ is a partial isometry, so is $D_{\bar{v}u}$. Hence by above argument, $g \in v\h$. This proves our claim.

We have shown that $\cle_{+}=\theta uH^2$ and $\cle_{-}=\overline{z v h_2}$ and 
	\[
	\cle_{+} \oplus \cle_{-} \subseteq \cle.
	\]

We will prove the equality in some specific cases.

\begin{thm}\label{NP}
Let $A_v$ be a non-zero partial isometry on $K_{\theta}$ for some inner function $v$. Then $\cle_{D_{v}}=\cle_{+} \oplus \cle_{-}$.
\end{thm}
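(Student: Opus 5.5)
Here $u=1$, so $\cle_+=\theta\h$ and $\cle_-=\overline{zv\h}$. The claim $\cle_{D_v}=\theta\h\oplus\overline{zv\h}$ then reduces to the reverse inclusion. Since $D_v$ is a partial isometry, $\cle_{D_v}=\cln(D_v)^\perp$ is a closed subspace, equal to $\{f\in K_\theta^\perp: vf\in K_\theta^\perp\}$.

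First I record the data. Since $A_v$ is a non-zero partial isometry, Theorem \ref{Thm:Du partial isometry characterization} together with Lemma \ref{Lem:A B and D partial isometry equivalences} gives $v\mid\theta$ or $\theta\mid v$. The case $\theta\mid v$ would force $A_v=0$, so $v\mid\theta$. Write $\theta=vw$ with $w$ inner.

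Now take $f=\theta h+\overline{zg}\in\cle_{D_v}$ with $h,g\in\h$. Then $vf=v\theta h+v\overline{zg}$ must lie in $K_\theta^\perp=\theta\h\oplus\overline{z\h}$. The term $v\theta h$ already lies in $\theta\h$, so the condition becomes $v\overline{zg}\in K_\theta^\perp$. Split $v\overline{zg}=P(v\overline{zg})+(I-P)(v\overline{zg})$. The co-analytic part is harmless, so we need $P(v\overline{zg})\in\theta\h$. But $P(v\overline{zg})=H^*$-type data lies in $K_v$: indeed, for $k\in\h$,
\[
\langle P(v\overline{zg}),vk\rangle=\langle \overline{zg},k\rangle=0,
\]
so $P(v\overline{zg})\in K_v$. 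Since $v\mid\theta$, we have $K_v\cap\theta\h\subseteq K_\theta\cap\theta\h=\{0\}$. Hence $P(v\overline{zg})=0$, that is, $v\overline{zg}\in\overline{z\h}$, which means $\bar v zg\in z\h$, i.e.\ $\bar v g\in\h$. By Lemma \ref{Lem:inner outer factorization}, applied with the inner-outer factorization as in the earlier claim, this gives $v\mid g^i$, so $g\in v\h$.

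Therefore $\theta h\in\cle_+$ and $\overline{zg}\in\cle_-$, which yields $\cle_{D_v}\subseteq\cle_+\oplus\cle_-$ and hence equality. The potential obstacle is the cross term: a priori the components need not be separately extremal. This is resolved because $v$ analytic keeps $v\theta h$ inside $\theta\h$. The only real point is the inclusion $P(v\overline{zg})\in K_v$ combined with $K_v\cap\theta\h=\{0\}$, and the latter uses $v\mid\theta$.
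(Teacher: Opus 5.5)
Your proof is correct and follows the paper's outline. You deduce $v\mid\theta$ from Theorem \ref{Thm:Du partial isometry characterization}, reduce extremality to $vf\in K_\theta^\perp$ for $f=\theta h+\overline{zg}$, and show $g\in vH^2$. The paper does that last step in one line, dividing $vf=\theta g_1+\overline{zg_2}$ by $v$ and comparing analytic and co-analytic parts via $\bar v\theta\in H^2$, whereas you show $P(v\overline{zg})\in K_v\cap\theta H^2=\{0\}$, which mirrors the paper's earlier argument for $\cle_+$. Your final appeal to Lemma \ref{Lem:inner outer factorization} is unnecessary, since $\bar v g\in H^2$ already gives $g=v(\bar v g)\in vH^2$.
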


\begin{proof}		
Since $A_v$ be a non-zero partial isometry, so is $D_v$ and hence, by \ref{Thm:Du partial isometry characterization}, either $v|\theta$ or $\theta |v$. If $\theta |v$, then $A_{v}=0$ which is not the case. Thus, $v|\theta$, that is, $\bar{v}\theta$ is inner. One can easily check that 
		\[
		\cle_{+}=\theta \h \quad \text{ and } \cle_{-}= \overline{zv\h}.
		\]
It is now enough to prove that
		\[
		\cle_{D_{v}}\subseteq \theta\h\oplus \overline{zv\h}.
		\]
Let $ \theta h_1 \oplus \overline{zh_2} \in \cle_{D_{v}}$. Then there exist $g_1,g_2 \in \h$ such that
\begin{align*}
			&v(\theta h_1+\overline{zh_2})=\theta g_1 +\overline{zg_2},\\
\mbox{and hence~}			& \theta h_1+\overline{zh_2}=\bar{v}\theta g_1 +\overline{zvg_2}.
\end{align*} 
Since $h_1, \bar{v}\theta g_1 \in \h$, we get $h_2=vg_2$. This completes the proof.
\end{proof}

\begin{rem}\label{Equality}
Under the assumptions of the previous theorem, using the fact that $D_u$ is a $C$-symmetric operator, we get
		\[
		\cle_{D_{\bar{u}}}=\theta u\h \oplus\h{}^\perp.
		\]
\end{rem}
	
Note that from Theorem \ref{vdividestheta} and Theorem \ref{NP}, we get the following result.
	
\begin{cor}
Let $u,v, \theta$ be inner functions such that $\text{gcd}(u, v)=1$ and $v$ divides $\theta$. If $A_{\bar{u}v}$ is a non-zero partial isometry, then $\cle_{D_{\bar{u}v}}=\cle_{+} \oplus \cle_{-}$.
\end{cor}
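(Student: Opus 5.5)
We reduce to the two cases already handled. Since $\gcd(u,v)=1$, $v\mid\theta$, and $A_{\bar{u}v}$ is a non-zero partial isometry, Lemma \ref{Lem:A B and D partial isometry equivalences} shows that $D_{\bar{u}v}$ is a partial isometry. We then split according to whether $\bar{u}v$ is co-analytic.

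\emph{Case 1: $\bar{u}v$ is not co-analytic.} Then $v$ is non-constant. If $v$ divides $\theta$ non-trivially, Theorem \ref{vdividestheta} forces $u$ to be a unimodular constant. The symbol is then a unimodular multiple of the inner function $v$, and the constant can be absorbed since it affects neither norms nor extremal vectors. Theorem \ref{NP} now gives $\cle_{D_v}=\cle_{+}\oplus\cle_{-}$. If instead $v$ is a unimodular multiple of $\theta$, then $\bar{u}v=c\bar{u}\theta$, and the theorem on $D_{\bar{u}\theta}$ applies. Since $A_{\bar{u}\theta}\neq 0$, we must have $u\mid\theta$, and $\gcd(u,\theta)=\gcd(u,v)=1$ makes $u$ constant. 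So we are again in the setting of Theorem \ref{NP}, now with $v=c\theta$.

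\emph{Case 2: $\bar{u}v$ is co-analytic.} Together with $\gcd(u,v)=1$ this forces $v$ to be constant, so the symbol is $c\bar{u}$. By $C$-symmetry we have $D_{\bar{u}}^*=CD_{\bar{u}}C$, and $A_{\bar u}=A_u^*$ is a non-zero partial isometry, hence so is $A_u$. Theorem \ref{NP} then gives $\cle_{D_u}=\cle_+^{(u)}\oplus\cle_-^{(u)}$. Applying $C$ transports this, as in Remark \ref{Equality}, to
\[
\cle_{D_{\bar{u}}}=\theta u\h\oplus\h{}^\perp .
\]
For $v$ constant we also have $\cle_{+}=\theta u\h$ and $\cle_{-}=\overline{z\h}=\h{}^\perp$, so the two descriptions agree.

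The step I expect to be the main obstacle is the bookkeeping in the identification $\cle_{+}=\theta u\h$ and $\cle_-=\overline{zv\h}$, established earlier under the standing assumptions, when $u$ or $v$ is constant. In particular, I need to check that the sets $\cle_\pm$ appearing in Theorem \ref{NP} and Remark \ref{Equality} are exactly the ones defined for the symbol $\bar{u}v$ after unimodular constants have been absorbed. I also need to verify that $C$ maps $\overline{z\h}$ onto $\theta\h$ and that $C$ preserves orthogonal direct sums; both follow because $C$ is an antiunitary involution that swaps $\theta\h$ and $\h{}^\perp$.
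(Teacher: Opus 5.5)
Your proposal is correct and follows the paper's route. Theorem \ref{vdividestheta} forces $u$ to be a unimodular constant, and Theorem \ref{NP} then gives the splitting; your treatment of the co-analytic case via $C$-symmetry and Remark \ref{Equality} covers a case the paper's one-line justification leaves implicit. One small point: your subcase $v=c\theta$ is actually vacuous, because once $u$ is forced to be constant, $A_{\bar u v}$ is a unimodular multiple of $A_\theta=0$, contradicting the hypothesis that it is non-zero, so Theorem \ref{NP} is not really being applied there.
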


The following  theorem is a more stronger result and it is true for any symbol of type 
$\bar{u}v$, where $u$ and $v$ are inner functions.

\begin{thm}
Let $A_{\bar{u}v}$ be a non-zero partial isometry such that $\text{gcd}(u, v)=1$ and $v$ divides $\theta$. Then $\cle_{D_{\bar{u}v}}=\cle_+ \oplus \cle_-$ if and only if either $u$ or $v$ is a unimodular constant.
\end{thm}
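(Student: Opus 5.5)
The plan is to prove the two implications separately. The converse direction will follow from the results already available. The forward direction will follow from the classification in Theorem \ref{vdividestheta} together with an analysis of the co-analytic case.

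\emph{Sufficiency.} Suppose $u$ is a unimodular constant. Then $\bar{u}v$ is a constant multiple of the inner function $v$, and $A_v$ is a non-zero partial isometry. Theorem \ref{NP} then gives $\cle_{D_{v}}=\cle_+\oplus\cle_-$ directly. Now suppose $v$ is a unimodular constant. Then the symbol is a constant multiple of $\bar{u}$, and Remark \ref{Equality} gives $\cle_{D_{\bar{u}}}=\theta u\h\oplus\h{}^\perp$. Here we have $\cle_+=\theta u\h$ and $\cle_-=\overline{zv\h}=\h{}^\perp$, so the equality holds in this case as well.

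\emph{Necessity.} Assume $\cle_{D_{\bar{u}v}}=\cle_+\oplus\cle_-$, and suppose neither $u$ nor $v$ is constant. We aim for a contradiction. First, suppose $\phi=\bar{u}v$ is not co-analytic. Since $v$ is non-constant and divides $\theta$, Theorem \ref{vdividestheta} (applied through Lemma \ref{Lem:A B and D partial isometry equivalences}) forces $u$ to be constant. If $v$ divides $\theta$ only trivially, i.e. $v=c\theta$, then $A_{\bar{u}v}=A_{\bar u\theta}$. The corollary following the $\bar u\theta$ theorem then shows that the symbol is analytic or co-analytic, which again leads to $u$ constant. So the remaining case is a co-analytic symbol $\bar{u}v$ with $\gcd(u,v)=1$. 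In that case $v$ must be constant, which again contradicts our assumption.

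The main obstacle is the bookkeeping at the boundary cases. We must check that the hypotheses of Theorem \ref{vdividestheta} (non-trivial division, not co-analytic) cover everything except the situations handled above. If some case escapes this reduction, a fallback is available: exhibit an explicit extremal vector $\theta h+\overline{zg}$ in which the $\theta\h$ and $\h{}^\perp$ parts are individually not extremal but their sum is. This is possible because $\bar{u}v(\theta h+\overline{zg})\in K_\theta^\perp$ can hold with cancellation between the two parts. To build such a vector, we would solve $\bar{u}v\theta h+\bar{u}v\overline{zg}\in\theta\h\oplus\h{}^\perp$ using elements of $K_u$, which are non-zero precisely when $u$ is non-constant.
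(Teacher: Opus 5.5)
Your proof is correct, but the necessity direction takes a genuinely different route from the paper's. You never use the hypothesis $\cle_{D_{\bar u v}}=\cle_+\oplus\cle_-$. Instead you show that the standing hypotheses alone ($A_{\bar uv}$ a non-zero partial isometry, $\gcd(u,v)=1$, $v\mid\theta$) already force $u$ or $v$ to be constant. Your case split is:
\begin{itemize}
\item the co-analytic case, where $v\mid u$ and so $v$ is constant;
\item the non-trivial-division case, handled by Theorem \ref{vdividestheta} together with the $A_\phi$/$B_\phi$/$D_\phi$ equivalence lemma;
\item the case $v=c\theta$, handled by the corollary to the $\bar u\theta$ theorem together with $\gcd(u,\theta)=1$.
\end{itemize}
This split is exhaustive, so your ``fallback'' paragraph is never invoked and should simply be dropped; as written it is only an unproved sketch.

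The paper instead uses the equality. Since $D_{\bar uv}$ is a partial isometry, $\cle_{D_{\bar uv}}=\cln(D_{\bar uv})^\perp$, and so $\cln(D_{\bar uv})=\theta K_u\oplus\overline{zK_v}$. Hence $T_{\bar uv}$ vanishes on $K_u$ and, via the conjugation $C$, $T_{u\bar v}$ vanishes on $K_v$. Coburn's alternative then gives $K_u=\{0\}$ or $K_v=\{0\}$.

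Your route is short given the classification. It also exposes that the statement is degenerate: under the stated hypotheses both sides of the equivalence always hold. This matches the corollary the paper records just before this theorem.

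The paper's route does not use $v\mid\theta$ at all. It needs only $\gcd(u,v)=1$ and the identifications $\cle_+=\theta u\h$, $\cle_-=\overline{zv\h}$. So it is the argument that survives for general symbols $\bar uv$, which is what the paper means by calling the theorem ``stronger''. Your argument depends entirely on $v\mid\theta$ and on the Sedlock-based machinery behind Theorem \ref{vdividestheta}.

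Your sufficiency direction is the same as the paper's, via Theorem \ref{NP} and Remark \ref{Equality}.
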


\begin{proof}
Suppose that $\cle_{D_{\bar{u}v}}=\cle_+ \oplus \cle_-$. Then
\[
\cln(D_{\bar{u}v})^\perp=\theta u \h \oplus \overline{zv\h}.
\]
Therefore,
\[
\cln(D_{\bar{u}v})=\theta K_u \oplus \overline{zK_v}.
\]
Let $g \in K_u$ and $h \in K_v$, then $\theta g, \overline{zh}\in \cln(D_{\bar{u}v})$. 
It follows that $\theta h =C(\overline{zh}) \in \cln(D_{\bar{v}u})$. Note that $\theta g \in \cln(D_{\bar{u}v})$ which implies $D_{\bar{u}v}(\theta g)=0$. Thus
\[
\bar{u}v \theta g \in K_\theta \implies \ D_{\bar{u}v}(\theta g)=0.
\]
Therefore, $\bar{u}v \theta g \in K_\theta$ and hence 
\[
T_u^* T_vg = T_\theta^*T_u^*T_vT_\theta g =0.
\]
On the similar lines, we can prove that $\theta h =C(\overline{zh}) \in \cln(D_{\bar{v}u})$ which implies that $T_v^*T_uh=0$. But by Coburn's alternative, either $T_u^*T_v$ or $T_v^*T_u$ is injective. Consequently, either $K_u =\{0\}$ or $K_v=\{0\}$.

The converse part follows from Theorem \ref{NP} and Remark \ref{Equality}.		
\end{proof}

\section{Concluding remarks}
	
With the help of the given structure of initial spaces and set of extremal vectors, we may try to characterize partial isometric TTO and DTTO for any general symbol of the form $\bar{u}v$, where $u$ and $v$ are inner functions.
The results obtained in this paper provide a partial solution to the problem of characterizing partially isometric truncated Toeplitz operators and dual truncated Toeplitz operators associated with $L^{\infty}(\T)$-symbols. More precisely, we solved the problem for symbols of the form $\bar{u}v$, where $v$ divides $\theta$.

An important remaining question is whether a complete characterization can be obtained for arbitrary symbols in $L^{\infty}(\T)$ (see \cite{DEBNATH-PRADHAN-SARKAR}). In particular, the unimodular case appears to be especially significant. Since partially isometric TTO or DTTO with unimodular symbol can be expressed in the form $\bar{u}v$, where $u$ and $v$ are inner functions (see Lemma \ref{Characterisation}). The problem essentially reduces to understanding the general $\bar{u}v$-symbol case without divisibility restrictions. The question remains open and seems to require new ideas beyond the methods developed here and will be carried out in the future project.

\label{Ref}


\begin{thebibliography}{99}
		
		\bibitem{BARANOV-BESSONOV-KAPUSTIN}{A. Baranov, R. Bessonov, and V. Kapustin,}
		{\it Symbols of truncated Toeplitz operators,}
		{J. Funct.\ Anal.\ $\bm{261}$ (2011), 3437--3456.}
		
		\bibitem{B-C-F-M-T}{A. Baranov, I. Chalendar, E. Fricain, J. Mashreghi, and D. Timotin,}
		{\it Bounded symbols and reproducing kernel thesis for truncated Toeplitz operators,}
		{J. Funct.\ Anal.\ $\bm{259}$ (2010), 2673--2701.}
		
		\bibitem{RB}{R. V. Bessonov,} 
		{\it Truncated Toeplitz operators of finite rank,} 
		{Proc.\ Amer.\ Math.\ Soc.\ (2014), 1301--1313}.
		
		\bibitem{Brown_Douglas}{A.~Brown, R. G. Douglas,}
		{\it Partially isometric Toeplitz operators,}
		{Proc.\ Amer.\ Math.\ Soc.\ $\bm{16}$ (1965), no.\ 4, 681--682.}
		
		
		\bibitem{ICF}{I. Chalendar, E. Fricain, and D. Timotin,}
		{\it A survey of some recent results on Truncated Toeplitz operators,}
		{In: Recent progress on operator theory and approximation in spaces of analytic functions, Contemp. Math., 679, Amer. Math. Soc. , Providence, RI, (2016), 59--77}.
		
		
		\bibitem{CIMA-GARCIA-ROSS-WOGEN}{J. A. Cima, S. R. Garcia, W. T. Ross, and W. R. Wogen,}
		{\it Truncated Toeplitz operators: spatial isomorphism, unitary equivalence, and similarity,}
		{Indiana Univ. Math. J. $\bm{59}$ (2010), 595--620.}
		
		\bibitem{CIMA-ROSS-WOGEN}{J. A. Cima, W. T. Ross, and W. R. Wogen,}
		{\it Truncated Toeplitz operator on finirte dimensional spaces,}
		{Oper.\ Matrices $\bm{2}$ (2008), 357--369.}
		
		\bibitem{DEBNATH-PRADHAN-SARKAR}{R. Debnath, D. K. Pradhan, and J. Sarkar,}
		{\it Pairs of inner projections and two applications,}
		{J. Funct.\ Anal.\ $\bm{286}$ (2024), no.~2, Paper No.\ 110216, 26 pp.}
		
		\bibitem{KD}{K. D. Deepak, D. Pradhan, and J. Sarkar,}
		{\it Partially isometric Toeplitz operators on the polydisc,}
		{Bull. Lond. Math. Soc. 54 (2022), 1350--1362}.
		
		\bibitem{XD}{X. Ding and Y. Sang,}
		{\it Dual truncated Toeplitz operators,}
		{J. Math. Anal. Appl. {\bf{461}} (2018), no. 1, 929--946.}
		
		\bibitem{RD}{R. G. Douglas,} 
		{\it Banach algebra techniques in operator theory,} 
		{Second edition. Graduate Texts in Mathematics, 179. Springer-Verlag, New York, 1998}.
		
		\bibitem{GM}{S.R. Garcia, J. Mashreghi, and W. T. Ross,}
		{\it Introduction to model spaces and their operators,}
		{(Vol. 148), Cambridge University Press, 2016.}
		
		
		\bibitem{G-ROSS-W}{S. R. Garcia, W. T. Ross, and W. R. Wogen,}
		{Spatial isomorphisms of algebras of truncated Toeplitz operators,}
		{Indiana Univ. Math. J. $\bm{59}$ (2010), 1971--2000.}
		
		\bibitem{C Gu}{C. Gu,}
		{\it Characterizations of dual truncated Toeplitz operators,}
		{J. Math.\ Anal.\ Appl.\ $\bm{496}$ (2021),  no.\ 2, Paper No.\ 124815, 24 pp.}
		
		\bibitem{KLN}{E. Ko, J. E. Lee, and T. Nakazi}, 
		{\it On the dilation of truncated Toeplitz operators II}, 
		{Complex Anal. Op. Th., 13 (2019), 3549-3568.}
	
		\bibitem{LI-SANG-DING}{Y.~Li, Y. Q. Sang, and X. H. Ding,}
		{\it The commutant and invariant subspaces for dual truncated Toeplitz operators,}
		{Banach J. Math.\ Anal.\ $\bm{15}$ (2021), no.\ 1, Paper No.\ 17, 26 pp.}
		
		\bibitem{PM}{P. Ma, F. Yan, and D. Zheng,}
		{\it Zero, Finite rank, and compact big truncated Hankel operators on model spaces, Proc.\ Amer.\ Math.\ Soc. (12) $\bf{146}$ (2018), 5235--5242.}	
		
		\bibitem{NIK}{N. K. Nikolski,}
		{\it Treatise on the shift operator: spectral function theory,}
		{Springer, New York, 1986.}
		
		
		\bibitem{SANG-QIN-DING}{Y.~Sang, Y.~Qin, and X.~Ding,}
		{\it A theorem of Brown-Halmos type for dual truncated Toeplitz operators,}
		{Ann.\ Funct.\ Anal.\ (2019) Art.\ No.\ 2.}
		
		\bibitem{DS}{D. E. Sarason,} 
		{\it Algebraic properties of truncated Toeplitz operators,} 
		{Oper.\ Matrices $\bm{1}$ (2007), no. 4, 491--526.}
		
		
		\bibitem{NS}{N. A. Sedlock,} 
		{\it Algebras of truncated Toeplitz operators,} 
		{Oper.\ Matrices {\bf 5} (2011), no. 2, 309--326.}
		
		\bibitem{S-T-Z}{E. Strouse, D. Timotin, and M. Zarrabi,}
		{\it Unitary equivalence to truncated Toeplitz operators,}
		{Indiana Univ. Math. J. $\bm{61}$ (2012), 525--538.}
		
		\bibitem{XY}{X. Yang,}
		{\it The relationship between model spaces and the invariant subspaces of the unilateral shift,}
		{Contemp. Math. $\bm{5}$ (2024), 1474--1486.}
		
		
		
		
		
\end{thebibliography}
\end{document}